\theoremstyle{plain}
\newtheorem{Thm}{Theorem}[section]
\newtheorem{Cor}[Thm]{Corollary}
\newtheorem{Lem}[Thm]{Lemma}
\newtheorem{Rem}[Thm]{\sl Remark}
\theoremstyle{definition}
\theoremstyle{remark}
\numberwithin{equation}{section}
\begin{document}
	\title[Sets of Fractional Difference Sequences]
	{Compact Operators on the Sets of Fractional Difference Sequences}
	
	\author{Faruk \"{O}zger}
	
	\address{Department of Engineering Sciences, Izmir Katip Celebi University, Izmir, Turkey}
	\email[\"{O}zger]{faruk.ozger@ikc.edu.tr; farukozger@gmail.com}

	\subjclass[2000]{Primary: 40H05; Secondary: 46H05}
	\keywords{Fractional difference sequence spaces, compact operators, Hausdorff measure of noncompactness \\
		This work is a part of the research project supported by Izmir Katip Celebi University Scientific Research Project Coordination Unit}

\begin{abstract}
Fractional difference sequence spaces have been studied in the literature recently. In this work, some identities or estimates for the operator norms and the Hausdorff measures of noncompactness of certain operators on some difference sequence spaces of fractional orders are established. Some classes of compact operators on those spaces are characterized. The results of this work are more general and comprehensive then many other studies in literature.
\end{abstract}

\maketitle

\section{Introduction}
The most common type of sets of sequences are probably the sets of difference sequences among the sequence spaces studied. The difference sequence spaces first introduced in K\i zmaz's study \cite{p6}. Many authors have made efforts to investigate the topological structures of these spaces during the past decade (see \cite{AyBas},\cite{ema},\cite{fofb},\cite{fofb2},\cite{candan},\cite{kirisci},\cite{ColEt}). Compact operators on the sets of difference sequences have been characterized in (\cite{ema22},\cite{mo},\cite{MalRak},\cite{Muretal}). We also refer to (\cite{jarrah},\cite{Wil2},\cite{p15},\cite{mara},\cite{mov},\cite{Mara7},\cite{Stieglitz}) for further studies in theory of $FK$-spaces and its applications.
\\
More recently, certain difference sequence spaces of fractional orders have been introduced by Baliarsingh \cite{BalDut}. Certain Euler difference sequence spaces of fractional order and related dual properties have been studied by Kadak and Baliarsingh \cite{KadBal}. Topological properties of certain sequence spaces that are combined by the mean operator and the fractional difference operator are investigated by Furkan \cite{Fur}.

The rest of the paper is organized as follows. In the rest of this section, we consider fractional operators, their properties and fractional sets of sequences $c_{0}(\Delta ^{(  \tilde{\alpha} )} )$, $c(\Delta ^{(  \widetilde{\alpha} )} )$ and $\ell_{\infty} (\Delta ^{(  \widetilde{\alpha} )}  )$. In section 2, we will determine the $\beta$ duals of fractional sets of sequences and
characterize matrix transformations on them. We also examine operator norms of our spaces. In section 3, we will study on characterizations of some compact operators by applying Hausdorff measure of noncompactness.

\subsection{Fractional Difference Operators}

The gamma function of a real number x (except zero and the negative integers) is defined by an improper integral:

\begin{eqnarray*}
\Gamma \left ( x \right )=\int_{0}^{\infty}e^{-t}t^{x-1}dt.
\end{eqnarray*}
\\
It is known that for any natural number  $n$, $\Gamma(n+1)=n!$ and $\Gamma(n+1)=n\Gamma(n)$ holds for any real number $n\notin\left \{ 0,-1,-2,... \right \}$.

The fractional difference operator for a fraction $\tilde{\alpha}$ have been defined in \cite{BalDut} as

\begin{eqnarray}\label{del}
\Delta ^{\left (  \tilde{\alpha} \right )}(x_{k})=\sum_{i=0}^{\infty}\left ( -1 \right )^{i}\frac{\Gamma \left ( \tilde{\alpha} +1 \right )}{\Gamma \left ( \tilde{\alpha} -i+1 \right )}x_{k-i}.
\end{eqnarray}

It is assumed that the series defined in (\ref{del}) is convergent for $x\in\omega$. \\

Let $m$ be a positive integer then recall the difference operators $\Delta^{(1)}$ and $ \Delta ^{(m)}$ by :

\begin{eqnarray*}
(\Delta^{(1)}x)_{k}=\Delta^{(1)}x_{k}=x_{k}-x_{k-1}
\end{eqnarray*}
and
\begin{eqnarray*}
(\Delta^{(m)}x)_{k}=\sum_{i=0}^{m}(-1)^{i}\binom{m}{i}x_{k-i}.
\end{eqnarray*}

We write $\Delta$ and $\Delta^{(m)}$ for the matrices with $\Delta_{nk}=(\Delta^{(1)}e^{(k)})_{n}$ and
$\Delta_{nk}^{(m)}=(\Delta^{(m)}e^{(k)})_{n}$
for all $n$ and $k$.

We may write the fractional difference operator as an infinite matrix:
\begin{equation*}
\Delta ^{\left (  \tilde{\alpha} \right )}_{nk} =\left\{
\begin{array}{lll}
(-1)^{n-k}\frac{\Gamma (\tilde{\alpha}+1)}{(n-k)!\Gamma (\tilde{\alpha}-n+k+1)} &  & (0\leq k\leq n) \\
0 &  & (k>n).%
\end{array}%
\right.
\end{equation*}

\begin{Rem}
The inverse of fractional difference matrix is given by
\begin{equation*}
\Delta ^{\left (  -\tilde{\alpha} \right )}_{nk} =\left\{
\begin{array}{lll}
(-1)^{n-k}\frac{\Gamma (-\tilde{\alpha}+1)}{(n-k)!\Gamma (-\tilde{\alpha}-n+k+1)} &  & (0\leq k\leq n) \\
0 &  & (k>n).%
\end{array}%
\right.
\end{equation*}
\end{Rem}
For some values of $\tilde{\alpha}$, we have
\begin{eqnarray*}
\Delta ^{1/2}x_{k}&=&x_{k}-\frac{1}{2}x_{k-1}-\frac{1}{8}x_{k-2}-\frac{1}{16}x_{k-3}-\frac{5}{128}x_{k-4}-...\\
\Delta ^{-1/2}x_{k}&=&x_{k}+\frac{1}{2}x_{k-1}+\frac{3}{8}x_{k-2}+\frac{5}{16}x_{k-3}+\frac{35}{128}x_{k-4}+...\\
\Delta ^{2/3}x_{k}&=&x_{k}-\frac{2}{3}x_{k-1}-\frac{1}{9}x_{k-2}-\frac{4}{81}x_{k-3}-\frac{7}{243}x_{k-4}-...
\end{eqnarray*}
\begin{Thm}The following results hold:
\begin{itemize}
\item[(i)]$\Delta ^{\left (  \tilde{\alpha} \right )}\circ \Delta ^{\left ( - \tilde{\alpha} \right )}=I$, where $I$ is identity on $x\in\omega$.
\item[(ii)]$\Delta ^{\left (  \tilde{\alpha} \right )} \Delta ^{\left ( \tilde{\beta} \right )}=
\Delta ^{\left ( \tilde{\alpha}+ \tilde{\beta} \right )}$.
\item[(iii)]$\Delta ^{\left (  \tilde{\alpha} \right )}( \Delta ^{\left ( - \tilde{\alpha} \right )}x_{k})=x_{k}$.
\end{itemize}
\end{Thm}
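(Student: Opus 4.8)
The plan is to pass to the infinite--matrix form of $\Delta^{\left(\tilde{\alpha}\right)}$ and reduce all three assertions to a single combinatorial identity of Chu--Vandermonde type for generalized binomial coefficients. First I would note that the three parts are not independent: (ii) is the substantive one; (i) is the special case $\tilde{\beta}=-\tilde{\alpha}$ of (ii), provided one checks separately that $\Delta^{(0)}=I$; and (iii) is nothing but (i) read off coordinatewise on a sequence $x\in\omega$, where no convergence difficulty arises because for each fixed $k$ only finitely many terms survive ($x_{k-i}=0$ once $k-i<0$) --- or one simply invokes the standing hypothesis on (\ref{del}). So the real content is (ii).

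Write $\binom{\tilde{\alpha}}{r}=\Gamma(\tilde{\alpha}+1)/\bigl(r!\,\Gamma(\tilde{\alpha}-r+1)\bigr)$ for the generalized binomial coefficient, so that $\Delta^{\left(\tilde{\alpha}\right)}_{nk}=(-1)^{n-k}\binom{\tilde{\alpha}}{n-k}$ for $0\le k\le n$ and $0$ otherwise. Since $\Delta^{\left(\tilde{\alpha}\right)}$ and $\Delta^{\left(\tilde{\beta}\right)}$ are lower triangular, each entry of the product is a \emph{finite} sum, and
\begin{equation*}
\bigl(\Delta^{\left(\tilde{\alpha}\right)}\Delta^{\left(\tilde{\beta}\right)}\bigr)_{nk}=\sum_{j=k}^{n}\Delta^{\left(\tilde{\alpha}\right)}_{nj}\Delta^{\left(\tilde{\beta}\right)}_{jk}=(-1)^{n-k}\sum_{j=k}^{n}\binom{\tilde{\alpha}}{n-j}\binom{\tilde{\beta}}{j-k}=(-1)^{n-k}\sum_{i=0}^{m}\binom{\tilde{\alpha}}{m-i}\binom{\tilde{\beta}}{i},
\end{equation*}
using $(-1)^{n-j}(-1)^{j-k}=(-1)^{n-k}$ and then the substitution $i=j-k$, $m=n-k$.

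The crucial step is the Chu--Vandermonde convolution
\begin{equation*}
\sum_{i=0}^{m}\binom{\tilde{\alpha}}{m-i}\binom{\tilde{\beta}}{i}=\binom{\tilde{\alpha}+\tilde{\beta}}{m},
\end{equation*}
which holds for every nonnegative integer $m$ and all real $\tilde{\alpha},\tilde{\beta}$: both sides are polynomials in the pair $(\tilde{\alpha},\tilde{\beta})$ of total degree $m$ that coincide whenever $\tilde{\alpha},\tilde{\beta}$ are nonnegative integers, by the classical Vandermonde identity, hence coincide identically; equivalently one compares the coefficients of $t^{m}$ in $(1+t)^{\tilde{\alpha}}(1+t)^{\tilde{\beta}}=(1+t)^{\tilde{\alpha}+\tilde{\beta}}$. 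Plugging this in gives $\bigl(\Delta^{\left(\tilde{\alpha}\right)}\Delta^{\left(\tilde{\beta}\right)}\bigr)_{nk}=(-1)^{n-k}\binom{\tilde{\alpha}+\tilde{\beta}}{n-k}=\Delta^{\left(\tilde{\alpha}+\tilde{\beta}\right)}_{nk}$ for all $n,k$, which is (ii). Taking $\tilde{\beta}=-\tilde{\alpha}$ yields $\Delta^{\left(\tilde{\alpha}\right)}\Delta^{\left(-\tilde{\alpha}\right)}=\Delta^{(0)}$, and since $\binom{0}{0}=1$ while $\binom{0}{r}=\Gamma(1)/\bigl(r!\,\Gamma(1-r)\bigr)=0$ for $r\ge1$ (the factor $\Gamma(1-r)$ having a pole), $\Delta^{(0)}$ is the identity matrix, which is (i). Finally, writing (i) out on $x\in\omega$,
\begin{equation*}
\bigl(\Delta^{\left(\tilde{\alpha}\right)}(\Delta^{\left(-\tilde{\alpha}\right)}x)\bigr)_{k}=\sum_{p=0}^{k}(-1)^{p}\Bigl(\sum_{i=0}^{p}\binom{\tilde{\alpha}}{i}\binom{-\tilde{\alpha}}{p-i}\Bigr)x_{k-p}=\sum_{p=0}^{k}(-1)^{p}\binom{0}{p}x_{k-p}=x_{k},
\end{equation*}
which is (iii).

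I expect the only genuinely nontrivial ingredient to be the Chu--Vandermonde identity at non-integer parameters; the rest is bookkeeping with lower-triangular matrices together with the $\Gamma$-functional equation $\Gamma(\gamma+1)=\gamma\,\Gamma(\gamma)$. Should one wish to avoid the polynomial-identity argument, the convolution can instead be proved by induction on $m$ from the Pascal recurrence $\binom{\gamma}{r}=\binom{\gamma-1}{r}+\binom{\gamma-1}{r-1}$, which follows directly from that functional equation and is valid for every real $\gamma$ (with the usual convention that $1/\Gamma$ vanishes at the non-positive integers). Throughout, as in \cite{BalDut}, one restricts to orders $\tilde{\alpha},\tilde{\beta}$ for which all the $\Gamma$-factors occurring are finite.
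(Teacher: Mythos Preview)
Your proof is correct, and it takes a genuinely different route from the paper's. The paper proves only Part~(iii), and does so by brute force: it expands $\Delta^{(\tilde{\alpha})}(\Delta^{(-\tilde{\alpha})}x_k)$ directly, collects the coefficients of $x_k,x_{k-1},x_{k-2},x_{k-3},x_{k-4}$, checks by hand that each bracketed expression vanishes (e.g.\ $-\alpha+\alpha=0$, $\tfrac{\alpha(\alpha-1)}{2!}-\alpha^2+\tfrac{\alpha(\alpha+1)}{2!}=0$, etc.), and then writes ``$\cdots$'', leaving the general term unproved; Parts~(i) and~(ii) are declared ``similar'' and omitted.

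You instead recognise that (ii) is the structural statement and reduce it, via the lower-triangular matrix product, to the Chu--Vandermonde convolution $\sum_{i=0}^{m}\binom{\tilde{\alpha}}{m-i}\binom{\tilde{\beta}}{i}=\binom{\tilde{\alpha}+\tilde{\beta}}{m}$ for all nonnegative integers $m$; (i) and (iii) then drop out as the special case $\tilde{\beta}=-\tilde{\alpha}$ together with the easy check $\Delta^{(0)}=I$. This buys you a proof that handles every coefficient simultaneously rather than the first few, explains \emph{why} the cancellations in the paper's computation occur (each bracket is precisely $\binom{0}{p}$), and gives (ii) with no extra work. The paper's approach, by contrast, is more concrete at the cost of being essentially a pattern-spotting verification rather than a closed argument. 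Your remark that each matrix-product entry is a finite sum is also a useful observation that the paper does not make explicit.
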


\begin{proof}
Since the proofs of Parts (i) and (ii) can similarly be obtained, we only prove Part (iii).

\begin{eqnarray*}
\begin{array}{lll}
\Delta ^{\left (  \tilde{\alpha} \right )}( \Delta ^{\left ( - \tilde{\alpha} \right )}x_{k})  & = & \Delta ^{\left (  \tilde{\alpha} \right )}\left \{x_{k}+x_{k-1}\alpha +x_{k-2}\frac{\alpha (\alpha +1)}{2!}+x_{k-3}\frac{\alpha (\alpha +1)(\alpha +2)}{3!}+ x_{k-4}\frac{\alpha (\alpha +1)(\alpha +2)(\alpha +3)}{4!}+\cdots \right \} \\
 & = & x_{k}+x_{k-1}\{-\alpha +\alpha\}+x_{k-2}\left\{ \frac{\alpha (\alpha -1)}{2!} +\alpha^2+\frac{\alpha (\alpha +1)}{2!}  \right\} + \\
 &  & x_{k-3}\left\{-\frac{\alpha (\alpha -1)(\alpha -2)}{3!}+\frac{\alpha^2 (\alpha -1)}{2!}-\frac{\alpha^2 (\alpha +1)}{2!}+\frac{\alpha (\alpha +1)(\alpha +2)}{3!}  \right\}+ \\
 &  & x_{k-4}\left\{\frac{\alpha (\alpha -1)(\alpha -2)(\alpha -3)}{4!}-\frac{\alpha^2 (\alpha -1)(\alpha -2)}{3!}+\frac{\alpha^2 (\alpha +1)(\alpha -1)}{2!2!}-\frac{\alpha^2 (\alpha +1)(\alpha +2)}{3!}  \right\}+\cdots \\
 & = & x_{k}.
\end{array}
\end{eqnarray*}

\end{proof}

We refer to \cite{BalDut} for more properties of the fractional difference operators.

Note that the results of this work are more general and comprehensive then many other studies related to difference sequence spaces in literature.

\subsection{Preliminaries Results and Notations}

For the reader’s convenience, we state the known results that are used here and in the sequel.

Let $\omega $ denote the set of all complex sequences $x=(x_{k})_{k=0}^{\infty }$. We write $\ell _{\infty}$, $c$, $c _{0}$
and $\phi $ for the sets of all bounded, convergent, null and finite
sequences, respectively; also $bs $, $cs $ and $\ell _{1}$ denote
the sets of all bounded, convergent and absolutely convergent
series.

A subspace $X$ of $\omega $ is said to be a $BK$ space if it is a
Banach space with continuous coordinates $P_{n}:X\rightarrow \mathbb{C}$ 
$(n=0,1,\dots )$, where $P_{n}(x)=x_{n}$ for all $x\in X$. A $BK$ space $
X\supset \phi $ is said to have $AK$ if every sequence $x=(x_{k})_{k=0}^{
\infty }\in X$ has a unique representation $x=\lim_{m\rightarrow
\infty }x^{[m]}$, where $x^{[m]}=\sum_{n=0}^{m}x_{n}e^{(n)}$ is the
$m$ section of the sequence $x$. Let $X$ be a normed space. Then $
S_{X}=\{x\in X:\left\Vert x\right\Vert =1\}$ and $\bar{B}_{X}=\{x\in
X:\left\Vert x\right\Vert \leq 1\}$ denote the unit sphere and closed unit ball in $X$, where $X$ is a normed space. By $N_{r}$ we
denote any subset of $\mathbb{N}_{0}$ with elements greater or equal
to $r$.

Given any infinite matrix $A=(a_{nk})_{n,k=0}^{\infty }$ of complex
numbers and any sequence $x$, we write $A_{n}=(a_{nk})_{k=0}^{\infty
}$ for the
sequence in the $n^{th}$ row of $A$, $A_{n}x=\mbox{$\sum_{k=0}^{\infty}$}%
a_{nk}x_{k}$ $(n=0,1,\dots )$ and $Ax=(A_{n}x)_{n=0}^{\infty }$, provided $
A_{n}\in x^{\beta }$ for all $n$. If $X$ and $Y$ are subsets of
$\omega $, then
\begin{equation*}
X_{A}=\{x\in \omega :Ax\in X\}
\end{equation*}

denotes the matrix domain of $A$ in $X$
and $(X,Y)$ is the class of all infinite matrices that map $X$ into $Y$; so $%
A\in (X,Y)$ if and only if $X\subset Y_{A}$.

Given $a\in \omega $, we write
\begin{equation*}
\left\Vert a\right\Vert _{X}^{\ast }=\sup\limits_{x\in
S_{X}}\left\vert \sum\limits_{k=1}^{\infty }a_{k}x_{k}\right\vert
\end{equation*}%
provided the expression on the write hand side is defined and finite
which is the case whenever $X$ is a $BK$ space and $a\in X^{\beta }$ (\cite%
{Wil2}, Theorem 7.2.9, p. 107).

An infinite matrix $T=(t_{nk})_{n,k=0}^{\infty }$ is said to be a
triangle if $t_{nk}=0$ $(k>n)$ and $t_{nn}\not=0$ for all $n$.

A sequence $(b_{n})_{n=0}^{\infty }$ in a linear metric space $X$ is
called
a Schauder basis if for each $x\in X$ there exists a unique sequence $%
(\lambda _{n})_{n=0}^{\infty }$ of scalars such that
$x=\sum_{n=0}^{\infty }\lambda _{n}b_{n}$.

\subsection{The Difference Sequence Spaces of Fractional Order}

Consider now the following fractional difference sequence spaces.

\begin{eqnarray*}
c_{0}(\Delta ^{(  \tilde{\alpha} )} )&:=&
\left \{ x=(x_{k})\in\omega:
\lim_{k}\sum_{i=0}^{\infty}(-1)^{i}\frac{\Gamma(\tilde{\alpha} +1)}{i!\Gamma(\tilde{\alpha} -i+1)}x_{k-i}=0 \ \right \},\ \ \
\\
c(\Delta ^{(  \tilde{\alpha} )} )&:=&
\left \{ x=(x_{k})\in\omega:
\lim_{k}\sum_{i=0}^{\infty}(-1)^{i}\frac{\Gamma(\tilde{\alpha} +1)}{i!\Gamma(\tilde{\alpha} -i+1)}x_{k-i} \text{  exists} \right \}
, \
\\
\ell_{\infty} (\Delta ^{(  \widetilde{\alpha} )}  )&:=&\left \{x=(x_{k})\in\omega: \sup_{k}\left\vert\sum_{i=0}^{\infty}(-1)^{i}\frac{\Gamma(\tilde{\alpha} +1)}{i!\Gamma(\tilde{\alpha} -i+1)}x_{k-i}\right\vert<\infty\right \}.
\end{eqnarray*}

Let us now define the sequence $y =(y _{k})$ which will be
used, by the $\Delta ^{(  \tilde{\alpha} )}$-transform of a sequence $
x=(x_{k}) $, that is,

\begin{eqnarray*}
y_{k}&=& x_{k}-\tilde{\alpha} x_{k-1}+\frac{\tilde{\alpha} (\tilde{\alpha} -1)}{2!}x_{k-2}-\frac{\tilde{\alpha} (\tilde{\alpha} -1)(\tilde{\alpha} -2)}{3!}x_{k-3}+\cdots  \\
&=&\sum_{i=0}^{\infty}(-1)^{i}\frac{\Gamma(\tilde{\alpha} +1)}{i!\Gamma(\tilde{\alpha} -i+1)}x_{k-i}.
\end{eqnarray*}

Hence, those spaces can be considered as the matrix domains of the triangle $\Delta ^{(  \tilde{\alpha} )}$ in the classical sequence spaces $c_{0}, c, \ell_{\infty} $.
We also have the following relation between the sequences $x=(x_{k}) $ and $y=(y_{k}) $:

\begin{eqnarray*}
x_{k}=\sum_{i=0}^{\infty}(-1)^{i}\frac{\Gamma(-\tilde{\alpha} +1)}{i!\Gamma(-\tilde{\alpha} -i+1)}y_{k-i}.
\end{eqnarray*}

\begin{Lem}
\label{BKspace} (\cite{Wil2}, Theorem 4.3.12, p. 63) Let
$\left( X,\left\Vert .\right\Vert \right) $ be a BK space. Then
$X_{T}$\ is a BK space with $\left\Vert .\right\Vert _{T}=\left\Vert
T\left( .\right) \right\Vert .$
\end{Lem}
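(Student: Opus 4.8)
The plan is to verify the three defining properties of a $BK$ space for $X_T$ equipped with the norm $\|x\|_T := \|Tx\|$: that it is a linear subspace of $\omega$ carrying a norm, that this norm is complete, and that the coordinate functionals $P_n$ are continuous with respect to it. The whole argument is a transfer of structure along the map $x \mapsto Tx$, so the first thing I would record is the structural fact that makes this work: since $T$ is a triangle, it is a bijection $\omega \to \omega$ whose inverse $S := T^{-1}$ is again a triangle. Indeed, the equation $y = Tx$ can be solved recursively, $x_0 = y_0/t_{00}$, $x_1 = (y_1 - t_{10}x_0)/t_{11}$, and so on, the division by the nonzero diagonal entries $t_{nn}$ causing no trouble; this determines $x$ uniquely and shows $x_n$ depends only on $y_0,\dots,y_n$, so $S$ is lower triangular. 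In particular $T$ is injective on $\omega$.

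From injectivity of $T$ and linearity of $x \mapsto Tx$ together with the norm axioms for $\|\cdot\|$ on $X$, it is immediate that $X_T$ is a linear subspace of $\omega$ and that $\|\cdot\|_T$ is a norm on it: $\|x\|_T = 0$ forces $Tx = 0$ and hence $x = 0$, while homogeneity and the triangle inequality are inherited directly from $\|\cdot\|$. For completeness, let $(x^{(j)})_j$ be Cauchy in $(X_T,\|\cdot\|_T)$. By definition of the norm, $(Tx^{(j)})_j$ is Cauchy in the $BK$ space $X$, hence converges in $X$ to some $y \in X$. Set $x := Sy = T^{-1}y \in \omega$; then $Tx = y \in X$, so $x \in X_T$, and $\|x^{(j)} - x\|_T = \|Tx^{(j)} - y\|_X \to 0$. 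Thus $X_T$ is a Banach space under $\|\cdot\|_T$.

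It remains to check that the coordinates $P_n : X_T \to \mathbb{C}$ are continuous. Since $X$ is a $BK$ space, each coordinate functional $y \mapsto y_k$ on $X$ is bounded, say $|y_k| \le M_k \|y\|_X$. Hence for $x \in X_T$ we get $|(Tx)_k| \le M_k \|Tx\|_X = M_k \|x\|_T$, so $x \mapsto (Tx)_k$ is a bounded linear functional on $X_T$ for each $k$. Using the triangular inverse from the first step, $x_n = \sum_{k=0}^{n} s_{nk}\,(Tx)_k$ is a finite linear combination of these bounded functionals, hence bounded on $X_T$; so $X_T$ has continuous coordinates and is therefore a $BK$ space with $\|\cdot\|_T = \|T(\cdot)\|$.

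The only genuinely load-bearing point is the invertibility of the triangle $T$ on $\omega$ and the observation that $T^{-1}$ maps $X$ back into $X_T$; this is exactly what lets the Cauchy sequence in $X_T$ be "pulled back" from its image in $X$, and without it the completeness step would fail. Everything else — the norm axioms and the boundedness of coordinates — is a routine consequence of the corresponding facts in $X$ combined with the triangular (hence finitely supported, row by row) form of $T^{-1}$.
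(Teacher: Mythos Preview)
Your proof is correct. The paper itself does not prove this lemma; it merely quotes it as a known result from Wilansky's \emph{Summability through Functional Analysis} (Theorem~4.3.12, p.~63). What you have written is precisely the standard argument behind that theorem: use the fact that a triangle $T$ is a linear bijection on $\omega$ with triangular inverse $S$, transfer the norm and completeness from $X$ along $T$, and recover continuity of the coordinates on $X_T$ from the finite-row formula $x_n = \sum_{k=0}^{n} s_{nk}(Tx)_k$ together with the $BK$ property of $X$. There is nothing to correct.
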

By Lemma \ref{BKspace}, defined fractional difference sequence spaces are complete, linear $BK$--spaces with the following norm:

\begin{eqnarray*}
\left\Vert x\right\Vert=\sup\limits_{n}\left\vert \sum_{i=0}^{\infty}(-1)^{i}\frac{\Gamma(\tilde{\alpha} +1)}{i!\Gamma(\tilde{\alpha} -i+1)}x_{n-i}\right\vert.
\end{eqnarray*}

\begin{Rem}
\label{rem1}(\cite{jarrah}, Remark 2.4)  The matrix domain $X_{T}$
of a linear metric sequence space $X$ has a basis if and only if $X$
has a basis.
\end{Rem}

\begin{Thm}
Let us define the sequences $c^{\left( n\right) }$ for $n=0,1,\dots $ and $%
c^{\left( -1\right) }$\ by

\begin{equation*}
c_{k}^{\left( n\right) }=\left\{
\begin{array}{lll}
0 &  & \left( 0\leq k<n\right) \\
(-1)^{k-n}\frac{\Gamma(-\tilde{\alpha} +1)}{(k-n)!\Gamma(-\tilde{\alpha} +n-k+1)}
&  & \left( k\geq n\right)%
\end{array}%
\right.
\end{equation*}

and

\begin{eqnarray*}
c_{k}^{\left( -1\right) }=\sum\limits_{n=0}^{k}(-1)^{k-n}\frac{\Gamma(-\tilde{\alpha} +1)}{(k-n)!\Gamma(-\tilde{\alpha} +n-k+1)}\ \text{  for  }\ k=0,1,\dots
\end{eqnarray*}

\begin{itemize}
\item[(i)] Then $\left( c^{\left( n\right) }\right) _{n=0}^{\infty }$ is a
\textit{Schauder }basis for $c_{0}(\Delta ^{(  \tilde{\alpha} )} )$ and
every sequence $x=\left( x_{n}\right) _{n=0}^{\infty }\in c_{0}(\Delta ^{(  \tilde{\alpha} )} )$ has a unique representation
\begin{eqnarray*}
x=\sum\limits_{n}(\Delta_{n} ^{(  \widetilde{\alpha} )}  x)c^{\left( n\right)  } \text{       }\forall n.
\end{eqnarray*}
\item[(ii)] Then $\left( c^{\left( n\right) }\right) _{n=-1}^{\infty }$ is a
\textit{Schauder }basis for $c(\Delta ^{(  \tilde{\alpha} )} ) $, and
every sequence $x=\left( x_{n}\right) _{n=0}^{\infty }\in c(\Delta ^{(  \tilde{\alpha} )} )$ has a unique representation

\begin{eqnarray*}
x=\xi c^{\left( -1\right) }+\sum\limits_{n}\left( y _{n}-\xi
\right) c^{\left( n\right) },
\end{eqnarray*}

where $\xi =\lim\limits_{n\rightarrow
\infty }y _{n}$.

\item[(iii)] The set $\ell_{\infty }(\Delta ^{(  \tilde{\alpha} )} )$ has no \textit{Schauder }basis.
\end{itemize}
\end{Thm}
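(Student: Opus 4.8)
The plan is to exploit the fact that each of the three spaces is the matrix domain $X_{\Delta^{(\tilde{\alpha})}}$ of the triangle $\Delta^{(\tilde{\alpha})}$ in a classical space $X\in\{c_{0},c,\ell_{\infty}\}$, together with the fact that $\Delta^{(\tilde{\alpha})}$ is invertible with inverse $\Delta^{(-\tilde{\alpha})}$ (the Remark displaying $\Delta^{(-\tilde{\alpha})}_{nk}$, and the identity $\Delta^{(\tilde{\alpha})}\Delta^{(-\tilde{\alpha})}=I$). Consequently the map $x\mapsto y=\Delta^{(\tilde{\alpha})}x$ is a linear bijection from $X_{\Delta^{(\tilde{\alpha})}}$ onto $X$ which, by the norm introduced after Lemma~\ref{BKspace}, is an isometry, hence a homeomorphism; therefore it carries Schauder bases of $X$ onto Schauder bases of $X_{\Delta^{(\tilde{\alpha})}}$, and its inverse $\Delta^{(-\tilde{\alpha})}$ does the reverse. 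Note also that all the matrix products occurring below are finite sums, since $\Delta^{(\pm\tilde{\alpha})}$ are triangles, so no convergence issues arise beyond those already handled by Lemma~\ref{BKspace} and by the basis expansions in $c_{0}$ and $c$.

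For (i), recall that $(e^{(n)})_{n=0}^{\infty}$ is the standard $AK$-basis of $c_{0}$. Under the homeomorphism above it pulls back to the sequences $\Delta^{(-\tilde{\alpha})}e^{(n)}$, that is, to the $n$-th columns of the inverse matrix; reading off $\Delta^{(-\tilde{\alpha})}_{kn}$ from the Remark shows that this column is precisely the sequence $c^{(n)}$ of the statement. Since $\Delta^{(\tilde{\alpha})}c^{(n)}=e^{(n)}\in c_{0}$, each $c^{(n)}$ lies in $c_{0}(\Delta^{(\tilde{\alpha})})$. Given $x\in c_{0}(\Delta^{(\tilde{\alpha})})$, put $y=\Delta^{(\tilde{\alpha})}x\in c_{0}$; then $y=\sum_{n}y_{n}e^{(n)}$ in $c_{0}$, and applying the continuous operator $\Delta^{(-\tilde{\alpha})}$ term by term gives $x=\sum_{n}y_{n}c^{(n)}=\sum_{n}(\Delta_{n}^{(\tilde{\alpha})}x)c^{(n)}$. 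Uniqueness follows by applying $\Delta^{(\tilde{\alpha})}$ to any representation $x=\sum_{n}\lambda_{n}c^{(n)}$ and invoking uniqueness of the expansion in $c_{0}$.

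Part (ii) is obtained in the same way from the Schauder basis $\{e\}\cup\{e^{(n)}:n\geq 0\}$ of $c$, where $e=(1,1,\dots)$, using that every $y\in c$ with $\xi=\lim_{k}y_{k}$ satisfies $y=\xi e+\sum_{n}(y_{n}-\xi)e^{(n)}$ because $y-\xi e\in c_{0}$. Pulling back, $e$ maps to $\Delta^{(-\tilde{\alpha})}e=c^{(-1)}$ (the row-sum sequence, which coincides with the displayed formula for $c_{k}^{(-1)}$) and $e^{(n)}$ maps to $c^{(n)}$; since $\Delta^{(\tilde{\alpha})}c^{(-1)}=\Delta^{(\tilde{\alpha})}\Delta^{(-\tilde{\alpha})}e=e\in c$, we get $c^{(-1)}\in c(\Delta^{(\tilde{\alpha})})$ and, applying $\Delta^{(-\tilde{\alpha})}$ term by term, the representation $x=\xi c^{(-1)}+\sum_{n}(y_{n}-\xi)c^{(n)}$ with $\xi=\lim_{n}y_{n}$, again with uniqueness inherited from $c$. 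Part (iii) is immediate from Remark~\ref{rem1}: $\ell_{\infty}(\Delta^{(\tilde{\alpha})})=(\ell_{\infty})_{\Delta^{(\tilde{\alpha})}}$ has a Schauder basis if and only if $\ell_{\infty}$ does; but $\ell_{\infty}$ is non-separable and hence has no Schauder basis.

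The proof is essentially bookkeeping; the one point requiring care is matching the abstractly produced basis vectors $\Delta^{(-\tilde{\alpha})}e^{(n)}$ and $\Delta^{(-\tilde{\alpha})}e$ with the explicit sequences $c^{(n)}$ and $c^{(-1)}$ written in the statement, which is a direct substitution into the Remark's formula for the entries of $\Delta^{(-\tilde{\alpha})}$.
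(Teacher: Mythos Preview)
Your argument is correct and is precisely the natural unpacking of the approach the paper itself relies on: the paper states the theorem immediately after Remark~\ref{rem1} (Jarrah--Malkowsky, that $X_{T}$ has a basis if and only if $X$ does) and gives no further proof, so your explicit identification of $c^{(n)}=\Delta^{(-\tilde{\alpha})}e^{(n)}$ and $c^{(-1)}=\Delta^{(-\tilde{\alpha})}e$ together with the isometry $x\mapsto\Delta^{(\tilde{\alpha})}x$ is exactly the intended route made explicit.
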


\section{The $\beta$ Duals and Operator Norms of Fractional Spaces}
If $x$ and $y$ are sequences and $X$ and $Y$ are subsets of
$\omega$, then we write $x\cdot y=(x_{k}y_{k})_{k=0}^{\infty}$,

\begin{equation*}
x^{-1}*Y= \{a\in\omega:a\cdot x\in Y\}
\end{equation*}
and
\begin{equation*}
M(X,Y)=\bigcap_{x\in X}x^{-1}*Y= \{a\in \omega:a\cdot x\in Y\mbox{ for all }x\in X\}
\end{equation*}

for the multiplier space
of $X$ and $Y$; in particular, we use the notation $x^{\beta}=x^{-1}*cs$
$X^{\beta}=M(X,cs)$ for the $\beta$
dual of $X$.

\begin{Lem}\label{lem3.4}
Let $T$ be a triangle and $S$ be its inverse and $%
R=S^{t}$, the transpose of $S$.

\begin{itemize}
\item[(i)]Let $X$ be a BK space with AK or $X=\ell _{\infty }$. Then $a\in \left( X_{T}\right)
^{\beta }$ if and only if $a\in \left( X^{\beta }\right) _{R}$ and
$W\in \left(X,c_{0}\right) $ where the triangle $W$ is defined for $n=0,1,2,\ldots $ by%

\begin{equation}
w_{nk}=\left\{
\begin{array}{lll}
0 &  & \left( k>n\right) \\
\sum\limits_{j=n}^{\infty }a_{j}s_{jk} &  & \left( 0\leq k\leq
n\right).
\end{array}%
\right. \label{w}
\end{equation}%
Moreover, if $a\in \left( X_{T}\right) ^{\beta }$ then

\begin{equation}
\sum\limits_{k}a_{k}z_{k}=\sum\limits_{k}(R_{k}a)(T_{k}z) \text{\  } \forall z\in X_{T}. \label{lem11}
\end{equation}

\item[(ii)] We have $a\in \left( c_{T}\right) ^{\beta }$ if and only if $%
a\in \left( \ell _{1}\right) _{R}$ and $W\in \left( c,c\right) .$
Moreover, if $a\in \left( c_{T}\right) ^{\beta }$ then we have
\end{itemize}

\begin{eqnarray}
\sum\limits_{k} a_{k}z_{k}=\sum\limits_{k} (R_{k}a) (T_{k}z) -
\lim\limits_{k} T_{k}z \lim\limits_{n} \sum\limits_{k=0}^{n}w_{nk}
\text{\ } \forall z \in c_{T}.\label{lem12}
\end{eqnarray}
\end{Lem}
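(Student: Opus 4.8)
The plan is to transport the problem from $X_{T}$ back to $X$ via the isomorphism $S=T^{-1}$ and then quote the classical characterizations of the matrix classes $(X,c)$, $(\ell_{\infty},c)$ and $(c,c)$. For $z\in X_{T}$ put $y=Tz\in X$, so that $z=Sy$ and $z_{k}=\sum_{j=0}^{k}s_{kj}y_{j}$. Interchanging the two finite sums in $\sum_{k=0}^{m}a_{k}z_{k}=\sum_{k=0}^{m}a_{k}\sum_{j=0}^{k}s_{kj}y_{j}$ yields $\sum_{k=0}^{m}a_{k}z_{k}=\sum_{j=0}^{m}y_{j}\sum_{k=j}^{m}a_{k}s_{kj}$. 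As soon as the series $R_{j}a=(Ra)_{j}=\sum_{k\geq j}s_{kj}a_{k}$ is known to converge one has $\sum_{k=j}^{m}a_{k}s_{kj}=R_{j}a-\sum_{k\geq m+1}s_{kj}a_{k}=R_{j}a-w_{m+1,j}$; since $w_{m+1,m+1}=R_{m+1}a$, a rearrangement produces the basic identity
\begin{equation*}
\sum_{k=0}^{m}a_{k}z_{k}=\sum_{j=0}^{m+1}(R_{j}a)\,y_{j}-(Wy)_{m+1}\qquad(m=0,1,\dots),
\end{equation*}
valid for every $z\in X_{T}$. This identity drives the whole argument, and getting the indexing around $W$ exactly right (the stray term $w_{m+1,m+1}y_{m+1}$) is the one delicate point.

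For part~(i), the implication ``$\Leftarrow$'' is immediate from the identity: if $a\in(X^{\beta})_{R}$ then $Ra\in X^{\beta}$, so $\sum_{j=0}^{m+1}(R_{j}a)y_{j}$ converges for every $y\in X$; if in addition $W\in(X,c_{0})$ then $(Wy)_{m+1}\to 0$, hence $\sum_{k}a_{k}z_{k}$ converges, $a\in(X_{T})^{\beta}$, and letting $m\to\infty$ gives (\ref{lem11}). For ``$\Rightarrow$'', assume $a\in(X_{T})^{\beta}$. Testing against $z=Se^{(k)}$, which lies in $X_{T}$ because $e^{(k)}\in X$ (as $X\supset\phi$, resp.\ trivially when $X=\ell_{\infty}$), shows that each $R_{k}a$ converges; consequently every column of $W$ is the tail of a convergent series and tends to $0$. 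Next, the matrix $C$ with $c_{mj}=\sum_{k=j}^{m}a_{k}s_{kj}$ satisfies $(Cy)_{m}=\sum_{k=0}^{m}a_{k}(Sy)_{k}$, which converges for each $y\in X$ because $Sy\in X_{T}$ and $a\in(X_{T})^{\beta}$; thus $C\in(X,c)$, with column limits $c_{mj}\to R_{j}a$. The classical description of $(X,c)$ for a $BK$ space with $AK$ (obtained from the automatic boundedness of $C$ between $BK$/$FK$ spaces together with approximation of $y$ by its sections), respectively Schur's theorem for $(\ell_{\infty},c)$, then forces the column-limit sequence $Ra$ to belong to $X^{\beta}$ --- i.e.\ $a\in(X^{\beta})_{R}$ --- and gives $\lim_{m}(Cy)_{m}=\sum_{j}(R_{j}a)y_{j}$. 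Substituting back into the basic identity shows $(Wy)_{m+1}$ converges, so $Wy\in c$, and the same description applied with zero column limits yields $\lim_{m}(Wy)_{m}=0$; hence $W\in(X,c_{0})$.

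Part~(ii) runs along the same lines with $X=c$, the one change being that the Kojima--Schur characterization of $(c,c)$ replaces the $AK$/$\ell_{\infty}$ ones. For ``$\Leftarrow$'': $a\in(\ell_{1})_{R}$ means $Ra\in\ell_{1}=c^{\beta}$, so $\sum_{j}(R_{j}a)y_{j}$ converges for $y\in c$, while $W\in(c,c)$ makes $(Wy)_{m+1}$ convergent, so $a\in(c_{T})^{\beta}$; to obtain (\ref{lem12}) one computes $\lim_{m}(Wy)_{m}$ from the $(c,c)$ limit formula, which --- because all columns of $W$ are null --- collapses to $\big(\lim_{k}T_{k}z\big)\,\lim_{n}\sum_{k=0}^{n}w_{nk}$, precisely the correction term in (\ref{lem12}). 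For ``$\Rightarrow$'': testing on $z=Se^{(k)}$ again produces the $R_{k}a$, the matrix $C$ above now lies in $(c,c)$ and hence has $\sup_{m}\sum_{j}|c_{mj}|<\infty$, so by Fatou $\sum_{j}|R_{j}a|\leq\sup_{m}\sum_{j}|c_{mj}|<\infty$, i.e.\ $a\in(\ell_{1})_{R}$; feeding this into the basic identity gives $W\in(c,c)$ as before. The real work is not in any single estimate but in making the reduction ``$X_{T}\leftrightarrow X$'' clean enough that the standard matrix-class theorems apply verbatim; once the basic identity is established the rest is bookkeeping.
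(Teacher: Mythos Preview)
Your argument is correct. The basic identity
\[
\sum_{k=0}^{m}a_{k}z_{k}=\sum_{j=0}^{m+1}(R_{j}a)\,y_{j}-(Wy)_{m+1}
\]
is derived cleanly, and from it both directions of (i) and (ii) follow by invoking the standard characterizations of $(X,c)$ for $BK$-$AK$ spaces, of $(\ell_{\infty},c)$ (Schur), and of $(c,c)$ (Kojima--Schur), together with the observation that the columns of $W$ are tails of convergent series and hence null. The Fatou step in (ii) to get $Ra\in\ell_{1}$ from the uniform row-norm bound on $C$ is the right device, and the collapse of the $(c,c)$ limit formula for $W$ to the single term $(\lim_{k}T_{k}z)\lim_{n}\sum_{k=0}^{n}w_{nk}$ is exactly what produces~(\ref{lem12}).

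As for comparison with the paper: the paper does \emph{not} prove this lemma. It is stated as a known auxiliary result (it is the content of results in Malkowsky--Rako\v{c}evi\'{c} \cite{Mara7} and related work), and the paper immediately moves on to Remark~\ref{rem3.5} and its application in Theorem~\ref{ourbeta}. So you have supplied a full self-contained proof where the paper simply cites; your approach is in fact the standard one underlying those references, so there is no methodological divergence to discuss --- you have just unpacked what the paper leaves as a black box.
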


\begin{Rem}\label{rem3.5} We have the following results:
\begin{itemize}
\item[(i)] If $X$ be a BK space with AK then the condition $W\in \left(X,c_{0}\right) $ in Lemma \ref{lem3.4}(i) can be replaced by
\begin{eqnarray}
W\in \left(X,\ell_{\infty}\right).\label{RBeta}
\end{eqnarray}

\item[(ii)] The condition $W\in \left(c,c\right) $ in Lemma \ref{lem3.4}(ii) can be replaced by  the conditions
\begin{eqnarray}
W\in \left(c_{0},\ell_{\infty}\right) \text{\  and} \label{RBeta2}
\end{eqnarray}
\begin{eqnarray}
\lim\limits_{n}W_{n}e=\gamma \text{\  exists}.\label{RBeta3}
\end{eqnarray}
\end{itemize}

\end{Rem}

\begin{proof} By definition of the triangle $W$ we have
\begin{eqnarray}
\lim\limits_{n}W_{n}e^{(k)}=\lim\limits_{n}\sum\limits_{j=n}^{\infty
}=0 \text{\  } \forall k.\label{RBeta4}
\end{eqnarray}
\begin{itemize}
\item[(i)]The conditions (\ref{RBeta}) and (\ref{RBeta4}) imply $W\in \left(X,c_{0}\right) $ because $X$ has $AK$ and $c_{0}$ is a closed subspace of $\ell_{\infty}$. On the other hand clearly $W\in \left(X,c_{0}\right) $ implies (\ref{RBeta}).
\item[(ii)]Since $c_{0}$ has $AK$ and $c$ is a closed subspace of $\ell_{\infty}$, the conditions (\ref{RBeta2}) and (\ref{RBeta4}) imply $W\in \left(c_{0},c_{0}\right) $. Then $W\in \left(c_{0},c\right) $ and the condition (\ref{RBeta3}) imply $W\in \left(c,c\right) $. On the other hand $W\in \left(c,c\right) $ implies (\ref{RBeta2}) and $We\in c $, that is the condition (\ref{RBeta3}) holds.
\end{itemize}
\end{proof}

\begin{Thm}
\label{ourbeta}We have

\begin{itemize}
\item[(i)] $a\in \left( c_{0}(\Delta ^{(  \tilde{\alpha} )} )\right) ^{\beta }$ if
and only if
\begin{equation}
\sum\limits_{k}\left\vert \sum\limits_{j=k}^{\infty
}(-1)^{j-k}\frac{\Gamma(-\tilde{\alpha} +1)}{(j-k)!\Gamma(-\tilde{\alpha} -j+k+1)}a_{j}\right\vert <\infty\label{MF2}
\end{equation} and

\begin{equation}
\sup\limits_{n}\left( \sum\limits_{k=0}^{n}\left\vert
\sum\limits_{j=n}^{\infty }(-1)^{j-k}\frac{\Gamma(-\tilde{\alpha} +1)}{(j-k)!\Gamma(-\tilde{\alpha} -j+k+1)}
a_{j}\right\vert \right)
<\infty; \label{MF3}
\end{equation}
moreover, if $a\in \left( c_{0}(\Delta ^{(  \tilde{\alpha} )} )\right) ^{\beta }$ then $\forall x\in c_{0}(\Delta ^{(  \tilde{\alpha} )})$

\begin{equation}
\sum\limits_{k}a_{k}x_{k}=\sum\limits_{k}\left( \sum\limits_{j=k}^{\infty
}(-1)^{j-k}\frac{\Gamma(-\tilde{\alpha} +1)}{(j-k)!\Gamma(-\tilde{\alpha} -j+k+1)}a_{j}\right)y_{k}.\label{MMF3}
\end{equation}

\item[(ii)] $a\in \left( c(\Delta ^{(  \widetilde{\alpha} )} )\right) ^{\beta }$
if and only if (\ref{MF2}), (\ref{MF3}) and
\begin{equation}
\lim\limits_{n}\sum\limits_{k=0}^{n}\left( \sum\limits_{j=n}^{\infty }(-1)^{j-k}\frac{\Gamma(-\tilde{\alpha} +1)}{(j-k)!\Gamma(-\tilde{\alpha} -j+k+1)}
a_{j}\right) =\rho; \label{MF4}
\end{equation}

moreover, if $a\in \left( c(\Delta ^{(  \tilde{\alpha} )} )\right) ^{\beta }$ then $\forall x\in c(\Delta ^{(  \tilde{\alpha} )})$

\begin{equation*}
\sum\limits_{k}a_{k}x_{k}=\sum\limits_{k}\left( \sum\limits_{j=k}^{\infty
}(-1)^{j-k}\frac{\Gamma(-\tilde{\alpha} +1)}{(j-k)!\Gamma(-\tilde{\alpha} -j+k+1)}a_{j}\right)y_{k}-\rho\lim\limits_{k} y_{k}.
\end{equation*}

\item[(iii)] $a\in \left(\ell_{\infty} (\Delta ^{(  \widetilde{\alpha} )}  ) \right) ^{\beta }$ if
and only if (\ref{MF2}) and

\begin{equation}
\lim\limits_{n}\sum\limits_{k=0}^{n}\left\vert \sum\limits_{j=k}^{\infty
}(-1)^{j-k}\frac{\Gamma(-\tilde{\alpha} +1)}{(j-k)!\Gamma(-\tilde{\alpha} -j+k+1)}a_{j}\right\vert =0; \label{MF5}
\end{equation}

moreover, if $a\in \left( \ell_{\infty}(\Delta ^{(  \tilde{\alpha} )} )\right) ^{\beta }$ then (\ref{MMF3}) holds $\forall x\in \ell_{\infty} (\Delta ^{(  \tilde{\alpha} )})$.
\end{itemize}
\end{Thm}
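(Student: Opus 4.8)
The plan is to derive Theorem \ref{ourbeta} by specializing Lemma \ref{lem3.4} and Remark \ref{rem3.5} to the triangle $T=\Delta^{(\tilde{\alpha})}$, whose inverse is $S=\Delta^{(-\tilde{\alpha})}$ by Theorem 1.1(i). First I would identify the three fractional spaces with matrix domains: $c_0(\Delta^{(\tilde{\alpha})})=(c_0)_T$, $c(\Delta^{(\tilde{\alpha})})=c_T$ and $\ell_\infty(\Delta^{(\tilde{\alpha})})=(\ell_\infty)_T$, as already noted in the excerpt. Since $c_0$ is a $BK$ space with $AK$, and $\ell_\infty$ is covered by the statement of Lemma \ref{lem3.4}(i) as the explicit exceptional case, part (i) of that lemma applies to $c_0(\Delta^{(\tilde{\alpha})})$ and $\ell_\infty(\Delta^{(\tilde{\alpha})})$, while part (ii) applies to $c(\Delta^{(\tilde{\alpha})})$.

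Next I would compute the two auxiliary objects appearing in the lemma. The transpose $R=S^t$ has entries $r_{kj}=s_{jk}=\Delta^{(-\tilde{\alpha})}_{jk}=(-1)^{j-k}\Gamma(-\tilde{\alpha}+1)/\big((j-k)!\,\Gamma(-\tilde{\alpha}-j+k+1)\big)$ for $j\ge k$ and $0$ otherwise, so that $R_k a=\sum_{j=k}^{\infty}(-1)^{j-k}\frac{\Gamma(-\tilde{\alpha}+1)}{(j-k)!\,\Gamma(-\tilde{\alpha}-j+k+1)}a_j$; this is exactly the inner sum appearing throughout the theorem. The condition $a\in(X^\beta)_R$ for $X=c_0$ or $X=\ell_\infty$ means $Ra\in X^\beta=\ell_1$ (using $c_0^\beta=\ell_\infty^\beta=\ell_1$), which is precisely \eqref{MF2}; for $X=c$ in part (ii) the lemma already demands $a\in(\ell_1)_R$, again \eqref{MF2}. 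Similarly the triangle $W$ of \eqref{w} becomes $w_{nk}=\sum_{j=n}^{\infty}a_j s_{jk}=\sum_{j=n}^{\infty}(-1)^{j-k}\frac{\Gamma(-\tilde{\alpha}+1)}{(j-k)!\,\Gamma(-\tilde{\alpha}-j+k+1)}a_j$ for $0\le k\le n$.

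Then I would translate the remaining matrix-class conditions using Remark \ref{rem3.5} together with the standard Stieglitz--Tietz characterizations of $(c_0,\ell_\infty)$, $(c,c)$ and $(c_0,c_0)$. For part (i): by Remark \ref{rem3.5}(i) the condition $W\in(c_0,c_0)$ reduces to $W\in(c_0,\ell_\infty)$, i.e. $\sup_n\sum_{k=0}^{n}|w_{nk}|<\infty$, which is \eqref{MF3}; combining with \eqref{MF2} gives the characterization, and formula \eqref{MMF3} is \eqref{lem11} with $R_k a$ and $T_k z=y_k$ substituted. For part (ii): $a\in(c_T)^\beta$ needs \eqref{MF2} plus $W\in(c,c)$, and by Remark \ref{rem3.5}(ii) the latter splits into $W\in(c_0,\ell_\infty)$ (again \eqref{MF3}) and the existence of $\lim_n W_n e=\lim_n\sum_{k=0}^{n}w_{nk}=:\rho$, which is \eqref{MF4}; the representation formula then comes from \eqref{lem12}, noting $\lim_n\sum_{k=0}^{n}w_{nk}=\rho$ and $T_k z=y_k$. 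For part (iii): $\ell_\infty$ has $\ell_\infty^\beta=\ell_1$, so Lemma \ref{lem3.4}(i) requires \eqref{MF2} and $W\in(\ell_\infty,c_0)$; since $\ell_\infty$ lacks $AK$, Remark \ref{rem3.5}(i) does not apply, and $W\in(\ell_\infty,c_0)$ is characterized directly by $\lim_n\sum_{k=0}^{n}|w_{nk}|=0$, which is \eqref{MF5}. The main obstacle, and the only place requiring genuine care rather than bookkeeping, is verifying that $\ell_\infty(\Delta^{(\tilde{\alpha})})$ falls under the $X=\ell_\infty$ branch of Lemma \ref{lem3.4}(i) with the correct $\beta$-dual $\ell_\infty^\beta=\ell_1$, and correctly invoking the precise form of the classes $(\ell_\infty,c_0)$ and $(c_0,\ell_\infty)$ so that \eqref{MF3} and \eqref{MF5} come out with the stated $\sup$/$\lim$ and the inner summation index $j$ starting at $n$ (coming from the definition of $W$) rather than at $k$.
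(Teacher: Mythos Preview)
Your proposal is correct and follows essentially the same route as the paper: both apply Lemma~\ref{lem3.4} and Remark~\ref{rem3.5} with $T=\Delta^{(\tilde\alpha)}$ and $S=\Delta^{(-\tilde\alpha)}$, compute $R_ka$ and $w_{nk}$ explicitly, then translate the conditions $Ra\in\ell_1$, $W\in(c_0,\ell_\infty)$, $W\in(c,c)$ and $W\in(\ell_\infty,c_0)$ into \eqref{MF2}--\eqref{MF5} via the standard characterizations, with the representation formulas coming from \eqref{lem11} and \eqref{lem12}. Your write-up is in fact more explicit than the paper's, and your closing remark about the inner index $j$ starting at $n$ in the $W$-conditions is apt.
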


\begin{proof}
We apply Lemma \ref{lem3.4} and Remark \ref{rem3.5}.

The triangles $R$\ and $W$ defined for $n=0,1,\ldots $ by%
\begin{equation}
R_{k}a=\sum\limits_{j=k}^{\infty }s_{jk}a_{j}=\sum\limits_{j=k}^{\infty
}(-1)^{j-k}\frac{\Gamma(-\tilde{\alpha} +1)}{(j-k)!\Gamma(-\tilde{\alpha} -j+k+1)}a_{j}\text{ \ }(k=0,1,\ldots ) \label{Rka}
\end{equation}%
and%
\begin{equation}
w_{nk}=\left\{
\begin{array}{ccl}
\sum\limits_{j=n}^{\infty
}(-1)^{j-k}\frac{\Gamma(-\tilde{\alpha} +1)}{(j-k)!\Gamma(-\tilde{\alpha} -j+k+1)}a_{j} &  & (0\leq k\leq n) \\
0 &  & (k>n).%
\end{array}\label{pwnk}
\right.
\end{equation}

The condition $Ra\in \ell _{1}$ of Lemma \ref{lem3.4} holds in each part because $c_{0}^{\beta}=c^{\beta}=\ell_{\infty}^{\beta}$.

By Lemma \ref{lem3.4} and Remark \ref{rem3.5}(ii), we must add the condition $W\in (c_{0},\ell _{\infty})$ which is equivalent to
\begin{equation*}
\sup\limits_{n} \sum_{k=0}^{n}\left\vert w_{nk}\right\vert < \infty
\end{equation*}

and this is the condition in (\ref{RBeta3}) to (\ref{MF3}), and (\ref{MF3}) is the condition in (\ref{MF4}).

By Lemma \ref{lem3.4}, we must add the condition $W\in (\ell _{\infty},c_{0})$ which is equivalent to
\begin{equation*}
\lim\limits_{n}\sum\limits_{k=0}^{n}\left%
\vert w_{nk}\right\vert =0.
\end{equation*}

Note that, (\ref{Rka}) in Parts (i) and (iii) and (\ref{pwnk}) come from (\ref{lem11}) and (\ref{lem12}), respectively.

\end{proof}
\subsection{Operators Norms  and Matrix Transformations of Fractional Sequence Spaces}
Let us now establish identities and inequalities of operator norms for fractional sequence spaces and then characterize some classes of matrix mappings on them. The following notations and results are needed to characterize certain classes of matrix mappings on the sets of fractional sequences and for determination of the operator norms of our spaces.

\begin{Lem}
Let $X$ and $Y$ be $BK$ spaces
\begin{itemize}
\item[(i) ] Then we have $\left( X,Y\right)\subset\mathcal{B}\left( X,Y\right)$, that is,
every  $A\in\left( X,Y\right)$ defines an operator
$L_{A}\in\mathcal{B}\left( X,Y\right)$, where $L_{A}(x)=Ax$ for all
$x\in X$ (see \cite{mara}, Theorem 1.23).
\item[(ii) ] If $X$ has AK then we have $\mathcal{B}\left( X,Y\right)\subset\left( X,Y\right)$, that is,
every  operator  $L\in\mathcal{B}\left( X,Y\right)$ is given by a
matrix $A\in\left( X,Y\right)$ such that $L(x)=Ax$ for all $x\in X$
(see \cite{jarrah}, Theorem 1.9).
\end{itemize}
\end{Lem}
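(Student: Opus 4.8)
The plan is to reduce each statement to the fact that a matrix acting between $BK$ spaces is automatically continuous, using the closed graph theorem. For part (i), suppose $A \in (X,Y)$; then $L_A : X \to Y$, $L_A(x) = Ax$, is a well-defined linear map because by hypothesis $X \subset Y_A$, so $Ax \in Y$ for every $x \in X$. To show $L_A$ is bounded, I would invoke the closed graph theorem: it suffices to show the graph of $L_A$ is closed. So suppose $x^{[m]} \to x$ in $X$ and $L_A(x^{[m]}) = Ax^{[m]} \to z$ in $Y$. Since $X$ is a $BK$ space, the coordinate functionals $P_k$ are continuous on $X$, hence $x^{[m]}_k \to x_k$ for each $k$; and since each row $A_n \in X^\beta$, the functional $x \mapsto A_n x = \sum_k a_{nk} x_k$ is a continuous linear functional on the $BK$ space $X$ (this is exactly the statement that $\|A_n\|_X^\ast < \infty$ quoted from Wilansky's Theorem 7.2.9), so $A_n x^{[m]} \to A_n x$ for each $n$. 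On the other hand, convergence in the $BK$ space $Y$ also implies coordinatewise convergence, so $(Ax^{[m]})_n \to z_n$ for each $n$. Comparing the two limits gives $z_n = A_n x$ for all $n$, i.e. $z = Ax = L_A(x)$, so the graph is closed and $L_A \in \mathcal{B}(X,Y)$.

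For part (ii), suppose $X$ has $AK$ and let $L \in \mathcal{B}(X,Y)$. The idea is to read off the matrix of $L$ from its action on the basis-like sequences $e^{(k)}$. Define $a_{nk} = (L e^{(k)})_n$, i.e. the $n$th coordinate of $L e^{(k)} \in Y \subset \omega$; this is legitimate since $e^{(k)} \in \phi \subset X$. Set $A = (a_{nk})$. For $x \in X$, the $AK$ property gives $x = \lim_{m} x^{[m]} = \lim_m \sum_{k=0}^m x_k e^{(k)}$ in the norm of $X$. Applying the continuous operator $L$ and using linearity, $L(x) = \lim_m \sum_{k=0}^m x_k L e^{(k)}$ in $Y$. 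Since $Y$ is a $BK$ space, convergence in $Y$ implies coordinatewise convergence, so for each fixed $n$, $(L(x))_n = \lim_m \sum_{k=0}^m x_k (L e^{(k)})_n = \sum_{k=0}^\infty a_{nk} x_k = A_n x$. In particular the series $A_n x$ converges for every $x \in X$, so $A_n \in X^\beta$ for all $n$, and $L(x) = Ax$ for all $x \in X$; since $L(x) \in Y$, we get $A \in (X,Y)$.

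The main obstacle — though in this setting it is quite mild — is making sure the coordinatewise comparison of limits in part (i) is rigorous: one must use the continuity of the coordinate functionals on \emph{both} $X$ and $Y$ together with the continuity of each row functional $A_n$ on $X$, and it is precisely here that the $BK$-space hypothesis (rather than mere completeness) is essential. Everything else is a routine application of the closed graph theorem and of the $AK$ property, and since the excerpt already records these facts (the $BK$/$AK$ definitions, Wilansky's Theorem 7.2.9 on $\|\cdot\|_X^\ast$), no further machinery is needed. I would remark that this is the standard argument (as in the cited references \cite{mara}, \cite{jarrah}) and present it briefly.
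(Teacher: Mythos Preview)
Your argument is correct and is the standard one: the closed graph theorem for part (i), using continuity of the coordinate functionals on both $X$ and $Y$ together with the continuity of each row functional $A_n$ on the $BK$ space $X$; and the $AK$ representation $x=\lim_m x^{[m]}$ for part (ii), reading off the matrix entries as $a_{nk}=(Le^{(k)})_n$ and passing to the coordinatewise limit in $Y$. One small notational remark: in your part (i) you write $x^{[m]}$ for a generic sequence in $X$ converging to $x$, whereas the paper reserves $x^{[m]}$ for the $m$-section $\sum_{n=0}^{m}x_ne^{(n)}$; to avoid a clash it would be cleaner to use, say, $x^{(m)}$ for the approximating sequence in the closed-graph step.

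As for comparison with the paper: there is nothing to compare. The paper does not prove this lemma at all; it simply records it as a known result, citing \cite{mara}, Theorem 1.23 for (i) and \cite{jarrah}, Theorem 1.9 for (ii). Your write-up supplies exactly the proof one finds in those references, so it is entirely appropriate (and more informative) than what the paper itself contains.
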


\begin{Lem}\label{operatornorm.1}
Let $Y$ be an arbitrary subset of $\omega$.
\begin{itemize}
\item[(i) ] Let $X$ be a $BK$ space with AK or $X=\ell_{\infty}$,
and $R=S^{t} $. Then $A\in(X_{T},Y)$
if and only if $\hat{A}\in\left( X,Y\right)$ and
$W^{(A_{n})}\in(X,c_{0})$ for all $n=0,1,\ldots $. Here $\hat{A}$ is
the matrix with rows $\hat{A}_{n}=RA_{n}$ for $n=0,1,\dots$, and the
triangles $W^{(A_{n})}$ $(n=0,1,\ldots)$ are defined as in
\ref{matriceW} with $a_{j}$ replaced by $a_{nj}$.

Moreover, if $A\in\left( X,Y\right)$ then we have $Az=\hat{A}(Tz)$
for all $z\in Z=X_{T}$ (see \cite{Mara7}, Theorem 3.4,
Remark 3.5(a)).

\item[(ii) ] We have $A\in(c_{T},Y)$ if and only if
$\hat{A}\in \left( c_{0},Y\right) $
and $W^{(A_{n})}\in \left( c,c\right) $ for all $n=0,1,\ldots $ and $\hat{A}%
e-(\gamma _{n})_{n=0}^{\infty }\in Y$, where

\begin{equation*}
\gamma
_{n}=\lim\limits_{m}\sum\limits_{k=0}^{m}w_{mk}^{(A_{n})} \text{for } n=0,1,\ldots
\end{equation*}

Moreover, if $A\in \left( c_{T},Y\right) $ then we have $Az=\hat{A}
(Tz)-\eta (\gamma _{n})_{n=0}^{\infty }$ for all $z\in c_{T}$, where
$\eta =\lim\limits_{k}T_{k}z$ (\cite{Mara7}, Theorem
1.23).
\end{itemize}
\end{Lem}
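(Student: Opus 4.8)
The plan is to reduce the matrix condition $A\in(X_T,Y)$ to a row‑by‑row application of the $\beta$‑dual characterisation in Lemma \ref{lem3.4}, combined with the change of variables induced by the triangle $T$. For part (i), I would first note that $A\in(X_T,Y)$ forces, for every $n$, the row functional $z\mapsto A_nz$ to be defined on all of $X_T$, i.e. $A_n\in(X_T)^\beta$. By Lemma \ref{lem3.4}(i) this is equivalent to $\hat{A}_n=RA_n\in X^\beta$ and $W^{(A_n)}\in(X,c_0)$, where $W^{(A_n)}$ is the triangle (\ref{w}) formed with $a_j$ replaced by $a_{nj}$, and identity (\ref{lem11}) then gives $A_nz=\sum_k(R_kA_n)(T_kz)=\hat{A}_n(Tz)$, hence $Az=\hat{A}(Tz)$ for every $z\in X_T$.

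Next I would use that $T$, being a triangle, is a bijection of $\omega$ onto itself and, by definition of the matrix domain, carries $X_T$ bijectively onto $X$ (its inverse being $z=Sx$). Writing $x=Tz$, the requirement ``$Az\in Y$ for all $z\in X_T$'' becomes ``$\hat{A}x\in Y$ for all $x\in X$'', that is, $\hat{A}\in(X,Y)$. For the converse, if $\hat{A}\in(X,Y)$ then each $\hat{A}_n\in X^\beta$ automatically, so together with the standing hypothesis $W^{(A_n)}\in(X,c_0)$ Lemma \ref{lem3.4}(i) returns $A_n\in(X_T)^\beta$ and the identity $A_nz=\hat{A}_n(Tz)$; summing over $n$, $Az=\hat{A}(Tz)\in Y$ for all $z\in X_T$. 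This settles part (i), including the formula $Az=\hat{A}(Tz)$.

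For part (ii) I would run the same argument with Lemma \ref{lem3.4}(ii) and Remark \ref{rem3.5}(ii): the row formula (\ref{lem12}) now reads $A_nz=\hat{A}_n(Tz)-\gamma_n\lim_kT_kz$ with $\gamma_n=\lim_m\sum_{k=0}^m w_{mk}^{(A_n)}$, so $Az=\hat{A}(Tz)-\eta(\gamma_n)_n$ with $\eta=\lim_kT_kz$. The one genuinely new point is that $Tz$ now ranges over $c$ rather than $c_0$; splitting $x=Tz=\eta e+x_0$ with $x_0\in c_0$ yields $Az=\hat{A}x_0+\eta(\hat{A}e-(\gamma_n)_n)$. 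Specialising $\eta=0$ forces $\hat{A}\in(c_0,Y)$, specialising $x_0=0$ and $\eta=1$ forces $\hat{A}e-(\gamma_n)_n\in Y$, and these two conditions together with $W^{(A_n)}\in(c,c)$ for all $n$ (needed, via Remark \ref{rem3.5}(ii), both to make (\ref{lem12}) valid on each row and to guarantee that the $\gamma_n$ exist) are also sufficient.

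The step I expect to require the most care is the bookkeeping in the $c_T$ case: one must check that the single condition $W^{(A_n)}\in(c,c)$ simultaneously licenses the row identity (\ref{lem12}) and produces the numbers $\gamma_n$, and that no uniformity in $n$ is needed, since membership in an arbitrary subset $Y\subset\omega$ is a purely sequence‑wise condition rather than a normed one. Identifying the triangle $W^{(A_n)}$ obtained from the row $A_n$ with the $W$ of Lemma \ref{lem3.4} applied to that row is the routine verification that glues the two cases together.
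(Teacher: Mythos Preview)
Your argument is correct and is precisely the standard derivation of this result: apply the $\beta$-dual characterisation of Lemma~\ref{lem3.4} row by row to obtain the identities $A_nz=\hat{A}_n(Tz)$ (respectively $A_nz=\hat{A}_n(Tz)-\gamma_n\eta$), and then use the bijection $T:X_T\to X$ to transfer the condition $Az\in Y$ to $\hat{A}x\in Y$; the splitting $Tz=\eta e+x_0$ in the $c_T$ case cleanly separates the $c_0$-part from the extra $\hat{A}e-(\gamma_n)_n\in Y$ condition.

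The only thing to note in the comparison is that the paper does \emph{not} give its own proof of this lemma: it is quoted directly from Malkowsky and Rako\v{c}evi\'{c} \cite{Mara7} (Theorem~3.4, Remark~3.5(a), and the result labelled Theorem~1.23 there), with the citations embedded in the statement itself. Your proposal therefore supplies a self-contained proof, built from Lemma~\ref{lem3.4} and Remark~\ref{rem3.5} already stated in the paper, where the paper simply defers to the literature. The approach you outline is in fact the same as the one used in \cite{Mara7}, so there is no methodological difference---you have reconstructed the cited proof rather than found an alternative.
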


\begin{Thm}\label{operatornorm.2} Let $X=c_{0}(\Delta ^{(  \tilde{\alpha} )} )$ or $X=\ell_{\infty} (\Delta ^{(  \widetilde{\alpha} )}  )$.

\begin{itemize}

\item[(i) ] Let $Y=c_{0},c,\ell_{\infty}$. If  $A\in\left( X_{T},Y\right)$
then, putting
\begin{equation*}
\left\Vert A\right\Vert _{(X_{T},\infty )}=\sup_{n}\left\Vert \hat{A}%
_{n}\right\Vert_{1} =\sup_{n} \sum\limits_{k}\left\vert \sum\limits_{j=k}^{\infty
}(-1)^{j-k}\frac{\Gamma(-\tilde{\alpha} +1)}{(j-k)!\Gamma(-\tilde{\alpha} -j+k+1)}a_{nj}\right\vert ,
\end{equation*}
we have $\left\Vert L_{A}\right\Vert=\left\Vert A\right\Vert
_{(X_{T},\infty )}$.

\item[(ii) ] Let $Y=\ell_{1}$. If  $A\in\left( X_{T},\ell_{1}\right)$. Then we have

\begin{equation*}
\left\Vert A\right\Vert _{\left( X_{T},1\right) }=\sup_{\substack{ N\subset \mathbb{N}
\\ N\text{ finite}}}\left( \sum\limits_{k}\left\vert
\sum\limits_{n\in N}\sum\limits_{j=k}^{\infty
}(-1)^{j-k}\frac{\Gamma(-\tilde{\alpha} +1)}{(j-k)!\Gamma(-\tilde{\alpha} -j+k+1)}a_{nj}\right\vert \right)\leq\left\Vert L_{A}\right\Vert\leq
4\left\Vert A\right\Vert _{\left( X_{T},1\right) }.
\end{equation*}
\end{itemize}
\end{Thm}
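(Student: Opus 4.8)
The plan is to reduce the statement to known operator-norm formulas for matrix maps into $Y=c_0,c,\ell_\infty$ and $Y=\ell_1$, combined with the domain-transfer machinery of Lemma \ref{operatornorm.1}. Recall that for $X=c_0(\Delta^{(\tilde\alpha)})$ or $X=\ell_\infty(\Delta^{(\tilde\alpha)})$ we have $X=Z_T$ with $Z=c_0$ or $Z=\ell_\infty$ and $T=\Delta^{(\tilde\alpha)}$, while $R=S^t$ with $S=\Delta^{(-\tilde\alpha)}$; thus the entries of $\hat A_n=RA_n$ are exactly $\hat a_{nk}=(R_k A_n)=\sum_{j=k}^{\infty}(-1)^{j-k}\frac{\Gamma(-\tilde\alpha+1)}{(j-k)!\,\Gamma(-\tilde\alpha-j+k+1)}a_{nj}$, which is precisely the inner sum appearing in the statement. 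By Lemma \ref{operatornorm.1}(i), if $A\in(X_T,Y)$ then $\hat A\in(Z,Y)$ and moreover $L_A(z)=Az=\hat A(Tz)$ for all $z\in X_T$, so that $\|L_A\|_{(X_T,Y)}=\|L_{\hat A}\|_{(Z,Y)}$ because $T$ is a norm-isometry from $X_T$ onto $Z$ (this uses that the norm on $X_T$ is $\|x\|=\|Tx\|_Z$, as recorded right after Lemma \ref{BKspace}).

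Granting that reduction, Part (i) follows from the classical fact that for any $BK$ space $Z$ among $c_0,c,\ell_\infty$ (all of which contain $\phi$ densely in the $AK$ sense for $c_0$, and for which the dual norm computation still gives the supremum of $\ell_1$-row-norms) one has $\|L_{\hat A}\|_{(Z,\infty)}=\sup_n\|\hat A_n\|_1$; in fact for $Z=\ell_\infty$ this is the statement that $\|a\|_{\ell_\infty}^{*}=\|a\|_1$, and the analogous identity holds with $Z=c_0$ or $Z=c$ since $c_0^\beta=c^\beta=\ell_\infty^\beta=\ell_1$ and the relevant dual norm is still the $\ell_1$-norm. Substituting $\hat a_{nk}$ and taking $\sup_n\sum_k|\hat a_{nk}|$ gives exactly the displayed expression for $\|A\|_{(X_T,\infty)}$, and equality with $\|L_A\|$ follows. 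For Part (ii) I would invoke the two-sided estimate for operators into $\ell_1$: for a $BK$ space $Z\supset\phi$ one has $\|B\|_{(Z,1)}\le\|L_B\|\le 4\,\|B\|_{(Z,1)}$ where $\|B\|_{(Z,1)}=\sup_{N\text{ finite}}\big\|\sum_{n\in N}B_n\big\|_Z^{*}$; this is the standard factor-$4$ Knopp–Lorentz type estimate (the $4$ coming from splitting a complex scalar into real/imaginary and positive/negative parts). Writing $B=\hat A$ and using $\big\|\sum_{n\in N}\hat A_n\big\|_Z^{*}=\sum_k\big|\sum_{n\in N}\hat a_{nk}\big|$ yields the middle quantity in the theorem, and the chain $\|A\|_{(X_T,1)}\le\|L_A\|\le 4\|A\|_{(X_T,1)}$ is obtained.

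The main obstacle is the reduction step itself: one must be careful that the passage $A\rightsquigarrow\hat A$ preserves operator norms and not merely membership in the relevant matrix class. Concretely, I need to verify that $T=\Delta^{(\tilde\alpha)}$ is a surjective linear isometry $X_T\to Z$ — surjectivity because $T$ is a triangle hence invertible with inverse $S=\Delta^{(-\tilde\alpha)}$ (Theorem on $\Delta^{(\tilde\alpha)}\circ\Delta^{(-\tilde\alpha)}=I$), and isometry because the $BK$ norm on $X_T$ is defined as $\|x\|=\|Tx\|_Z$. Once that is in hand, $\|L_A\|=\sup_{\|x\|\le 1}\|Ax\|_Y=\sup_{\|Tx\|_Z\le1}\|\hat A(Tx)\|_Y=\sup_{\|z\|_Z\le1}\|\hat A z\|_Y=\|L_{\hat A}\|$, and everything reduces to the classical $Z$-to-$Y$ norm formulas. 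A secondary point requiring attention is that in Part (i) the case $X=\ell_\infty(\Delta^{(\tilde\alpha)})$ uses the $\ell_\infty$-branch of Lemma \ref{operatornorm.1}(i), which is explicitly allowed ("$X$ a $BK$ space with $AK$ or $X=\ell_\infty$"), so no $AK$ hypothesis is actually needed for these norm identities; the $AK$ assumption would only be relevant if one also wanted $\mathcal B(X,Y)\subset(X,Y)$, which is not claimed here.
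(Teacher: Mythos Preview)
Your proposal is correct and follows essentially the same route as the paper, which simply defers to \cite{djolovıc}, Theorem 2.8; you have merely unpacked that citation by making explicit the isometry $T:X\to Z$ (with $Z=c_0$ or $\ell_\infty$), the identity $Az=\hat A(Tz)$ from Lemma \ref{operatornorm.1}(i), and the classical norm formulas $\|L_{\hat A}\|=\sup_n\|\hat A_n\|_1$ for $Y\in\{c_0,c,\ell_\infty\}$ and the factor-$4$ estimate for $Y=\ell_1$. There is no substantive difference in strategy.
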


\begin{proof}
The proof is based on the results in (\cite{djolovıc}, Theorem 2.8)
\end{proof}

\begin{Thm}\label{operatornorm.3} The operator norm of the set  $c(\Delta ^{(  \widetilde{\alpha} )} )$ is given.
\begin{itemize}
\item[(i) ]
Let $A\in\left(c(\Delta ^{(  \widetilde{\alpha} )} ),Y\right)$, where $Y$is any of the spaces
$c_{0},c$ or $\ell_{\infty}$. Then we have
\begin{equation*}
\left\Vert L_{A}\right\Vert =\left\Vert A\right\Vert _{\left(
c(\Delta ^{(  \widetilde{\alpha} )} ),\infty
\right) }=\sup_{n}\left( \sum\limits_{k}\left\vert \sum\limits_{j=k}^{\infty
}(-1)^{j-k}\frac{\Gamma(-\tilde{\alpha} +1)}{(j-k)!\Gamma(-\tilde{\alpha} -j+k+1)}a_{nj}\right\vert +\left\vert \gamma _{n}\right\vert \right),
\end{equation*}

where $ \gamma
_{n}=\lim_{m}\sum_{k=0}^{m}w_{mk}^{(A_{n})} \text{for } n=0,1,\ldots $

\item[(ii) ]
Let $A\in\left( c(\Delta ^{(  \widetilde{\alpha} )} ),\ell_{1}\right)$, then, putting,

\begin{equation*}
\left\Vert A\right\Vert _{\left( c(\Delta ^{(  \widetilde{\alpha} )} ),1\right) }=\sup_{\substack{ N\subset \mathbb{N}
\\ N\text{ finite}}}\left( \sum\limits_{k}\left\vert
\sum\limits_{n\in N}\sum\limits_{j=k}^{\infty
}(-1)^{j-k}\frac{\Gamma(-\tilde{\alpha} +1)}{(j-k)!\Gamma(-\tilde{\alpha} -j+k+1)}a_{nj}\right\vert +\left\vert \sum\limits_{n\in N}\gamma
_{n}\right\vert \right)
\end{equation*}

we have

\begin{equation*}
\left\Vert A\right\Vert _{\left( c(\Delta ^{(  \widetilde{\alpha} )} ),1\right) } \leq \left\Vert L_{A}\right\Vert \leq
4\left\Vert A\right\Vert _{\left( c(\Delta ^{(  \widetilde{\alpha} )} ),1\right) }.
\end{equation*}
\end{itemize}

\end{Thm}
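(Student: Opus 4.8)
\textbf{Proof proposal for Theorem \ref{operatornorm.3}.}

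The plan is to reduce everything to Lemma \ref{operatornorm.1}(ii) with $T=\Delta^{(\tilde\alpha)}$, $S=T^{-1}=\Delta^{(-\tilde\alpha)}$ and $R=S^{t}$, exactly as was done for the $c_0$ and $\ell_\infty$ cases in Theorem \ref{operatornorm.2}, and then invoke the known operator-norm identities on the classical spaces. Concretely, recall that $c(\Delta^{(\tilde\alpha)})=c_{T}$, so for $A\in(c_{T},Y)$ Lemma \ref{operatornorm.1}(ii) gives the representation $Az=\hat A(Tz)-\eta(\gamma_{n})_{n}$ for all $z\in c_{T}$, where $\hat A_{n}=R A_{n}$, $\eta=\lim_{k}(Tz)_{k}=\lim_{k}y_{k}$, and $\gamma_{n}=\lim_{m}\sum_{k=0}^{m}w_{mk}^{(A_{n})}$. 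The scalar $\eta$ together with the sequence $y=Tz$ ranges over all of $c$ as $z$ ranges over $c_{T}$, and $\|z\|_{c_T}=\|y\|_{\infty}$ by Lemma \ref{BKspace}; so computing $\|L_A\|$ amounts to computing the norm of the operator $y\mapsto \hat A y-(\lim_k y_k)(\gamma_n)_n$ from $c$ into $Y$.

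For part (i), $Y\in\{c_0,c,\ell_\infty\}$. First I would observe that this operator is itself given by a matrix acting on $c$: writing $B$ for the matrix with rows $B_n = \hat A_n$ but with the ``column at infinity'' handled via the standard device, one has $(By)_n=\sum_k \hat a_{nk}y_k+\gamma_n\lim_k y_k$ after subtracting, i.e. the relevant functional on row $n$ is $y\mapsto \sum_k\hat a_{nk}y_k-\gamma_n\,\xi$ where $\xi=\lim y_k$; rewriting $\xi$ via $e$ one checks the $n$-th coordinate functional has norm on $c$ equal to $\sum_k|\hat a_{nk}|+|\gamma_n|$ (this is the classical formula for the norm of a continuous linear functional on $c$, cf. the column-at-infinity term). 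Since for operators into $c_0$, $c$ or $\ell_\infty$ the operator norm is the supremum over $n$ of the norms of the coordinate functionals — this is precisely the content invoked for Theorem \ref{operatornorm.2}(i) via (\cite{djolovıc}, Theorem 2.8) — we get $\|L_A\|=\sup_n\big(\sum_k|\hat a_{nk}|+|\gamma_n|\big)$, which after substituting $\hat a_{nk}=R_k A_n=\sum_{j=k}^{\infty}(-1)^{j-k}\frac{\Gamma(-\tilde\alpha+1)}{(j-k)!\,\Gamma(-\tilde\alpha-j+k+1)}a_{nj}$ is exactly the asserted expression for $\|A\|_{(c(\Delta^{(\tilde\alpha)}),\infty)}$.

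For part (ii), $Y=\ell_1$. Here I would again pass to the induced operator $y\mapsto \hat A y-(\lim_k y_k)(\gamma_n)_n$ from $c$ to $\ell_1$ and use the Knopp–Lorentz / Jarrah–Malkowsky-type estimate for $\|L\|$ when $L:c\to\ell_1$: the operator norm is squeezed between the ``finite-subset'' quantity $\sup_{N}\big(\sum_k|\sum_{n\in N}\hat a_{nk}|+|\sum_{n\in N}\gamma_n|\big)$ and four times it — the factor $4$ being the usual constant that appears when one replaces the exact $\|(c,\ell_1)\|$ norm by its finite-section approximation (the same factor $4$ already appears in Theorem \ref{operatornorm.2}(ii) and ultimately traces back to \cite{djolovıc}, Theorem 2.8, or the classical characterization of $(\ell_\infty,\ell_1)$). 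Substituting the expression for $\hat a_{nk}$ yields $\|A\|_{(c(\Delta^{(\tilde\alpha)}),1)}$ and the double inequality.

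The main obstacle, and the only place demanding genuine care rather than bookkeeping, is the correct treatment of the ``column at infinity'': the domain is $c$, not $c_0$, so the limit functional $\xi=\lim_k y_k$ contributes an extra term $\gamma_n$ in every row, and one must verify (a) that the reduced operator on $c$ really is represented by the matrix $\hat A$ together with this $\gamma$-correction — this is exactly Lemma \ref{operatornorm.1}(ii) — and (b) that the norm of a row functional $y\mapsto\sum_k\hat a_{nk}y_k-\gamma_n\xi$ on $c$ equals $\sum_k|\hat a_{nk}|+|\gamma_n|$, which requires the standard but slightly delicate argument identifying $c^{*}$ and the attendant sign-choices; for the $\ell_1$ target one must additionally keep the finite-subset suprema and the constant $4$ in step with the classical result. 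Everything else is a direct substitution of the formula $s_{jk}=(-1)^{j-k}\frac{\Gamma(-\tilde\alpha+1)}{(j-k)!\,\Gamma(-\tilde\alpha-j+k+1)}$ for the entries of $S=\Delta^{(-\tilde\alpha)}$ into the general identities.
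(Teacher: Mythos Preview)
Your proposal is correct and follows essentially the same route as the paper. The paper's own proof is a one--line citation of \cite{djolovıc}, Theorem 2.9; your sketch simply unpacks what that reference does --- reduce via Lemma \ref{operatornorm.1}(ii) to the induced operator $y\mapsto \hat A y-(\lim_k y_k)\,\gamma$ on the classical space $c$, then compute its norm row-by-row (respectively via the finite--subset estimate with constant $4$ for the $\ell_1$ target). So there is no methodological difference, only a difference in level of detail.
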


\begin{proof}
The proof is based on the results in (\cite{djolovıc}, Theorem 2.9)
\end{proof}

\begin{Thm}
\label{EMAPAPER.S.4.1}
The necessary and sufficient conditions for $A\in(\ell_{\infty} (\Delta ^{(  \widetilde{\alpha} )}  ),Y)$ $A\in(c_{0}(\Delta ^{(  \tilde{\alpha} )} ),Y)$ and $A\in(c(\Delta ^{(  \widetilde{\alpha} )} ),Y)$, where $Y\in \left\lbrace \ell_{\infty},c_{0},c,\ell_{1}\right\rbrace $ can be read from the following
table:
\begin{center}
\begin{tabular}{||l|c|c|c||}\hline
    \begin{tabular}[t]{lr}
        & From\\
        To &
    \end{tabular}
    &  $\ell_{\infty} (\Delta ^{(  \widetilde{\alpha} )}  )$ & $c_{0}(\Delta ^{(  \tilde{\alpha} )} )$ & $c(\Delta ^{(  \tilde{\alpha} )} )$\\
    $\ell_{\infty}$ & {\bf 1.} & {\bf 1.} & {\bf 2.}\\
    $c_{0}$ & {\bf 3.} & {\bf 3.} & {\bf 4.}\\
    $c$ & {\bf 5.} & {\bf 5.} & {\bf 6.}\\
    $\ell_{1}$ & {\bf 7.} & {\bf 7.} & {\bf 8.}\\\hline
\end{tabular}
\end{center}
\end{Thm}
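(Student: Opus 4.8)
The plan is to recognize each of the three fractional spaces as a matrix domain $X_{T}$ of a classical $BK$ space $X\in\{\ell_{\infty},c_{0},c\}$ under the triangle $T=\Delta^{(\tilde{\alpha})}$, whose inverse is $S=\Delta^{(-\tilde{\alpha})}$ and whose transpose-of-inverse $R=S^{t}$ has the entries written out explicitly in (\ref{Rka}). Then I would apply Lemma \ref{operatornorm.1}: for $X=c_{0}(\Delta^{(\tilde{\alpha})})$ and $X=\ell_{\infty}(\Delta^{(\tilde{\alpha})})$ part (i) reduces $A\in(X_{T},Y)$ to the two requirements $\hat{A}\in(X,Y)$ and $W^{(A_{n})}\in(X,c_{0})$ for every $n$, where $\hat{A}$ has rows $\hat{A}_{n}=RA_{n}$ and $W^{(A_{n})}$ is the triangle (\ref{pwnk}) with $a_{j}$ replaced by $a_{nj}$; for $X=c(\Delta^{(\tilde{\alpha})})=c_{T}$ part (ii) adds the extra membership $\hat{A}e-(\gamma_{n})_{n}\in Y$ with $\gamma_{n}=\lim_{m}\sum_{k=0}^{m}w_{mk}^{(A_{n})}$. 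Since $c_{0}^{\beta}=c^{\beta}=\ell_{\infty}^{\beta}=\ell_{1}$, the rows $RA_{n}$ are automatically well defined once $RA_{n}\in\ell_{1}$, which is exactly condition (\ref{MF2}) applied row-wise. So the backbone of every table entry is: the row-wise version of (\ref{MF2}) guaranteeing $\hat{A}_{n}$ exists, plus the classical characterization of $(X,Y)$ applied to $\hat{A}$, plus the $W^{(A_{n})}$-conditions.

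Next I would discharge the $W^{(A_{n})}$-conditions using Remark \ref{rem3.5}. For the AK case (and $\ell_{\infty}$), $W^{(A_{n})}\in(X,c_{0})$ is equivalent, by (\ref{RBeta}) and (\ref{RBeta4}), to $W^{(A_{n})}\in(X,\ell_{\infty})$, i.e.\ to $\sup_{m}\sum_{k=0}^{m}|w_{mk}^{(A_{n})}|<\infty$ for each fixed $n$; writing $w_{mk}^{(A_{n})}$ out with (\ref{pwnk}) turns this into the ``$\sup$-over-initial-segments'' condition already met in (\ref{MF3}), now with $a_{j}$ replaced by $a_{nj}$. For $X=c_{T}$ the condition $W^{(A_{n})}\in(c,c)$ is replaced, by Remark \ref{rem3.5}(ii), by $W^{(A_{n})}\in(c_{0},\ell_{\infty})$ together with the existence of $\gamma_{n}=\lim_{m}W^{(A_{n})}_{m}e$; the first is again the bound (\ref{MF3}) row-wise and the second is the limit (\ref{MF4}) row-wise. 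Once these reductions are in place, each of the eight table cells is obtained by inserting into the classical Stieglitz-type characterization of $(X,Y)$ (for $X,Y$ among $\ell_{\infty},c_{0},c$ the well-known column-limit and bounded-row-norm conditions, and for $Y=\ell_{1}$ the supremum over finite $N\subset\mathbb{N}$ of $\sum_{k}|\sum_{n\in N}\hat{a}_{nk}|$) the explicit value $\hat{a}_{nk}=\sum_{j=k}^{\infty}(-1)^{j-k}\frac{\Gamma(-\tilde{\alpha}+1)}{(j-k)!\,\Gamma(-\tilde{\alpha}-j+k+1)}a_{nj}$, together with, for the domain $c(\Delta^{(\tilde{\alpha})})$, the quantity $\gamma_{n}$ and the membership $(\hat{A}e-(\gamma_{n})_{n})\in Y$.

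The step I expect to be the main obstacle is the bookkeeping for the domain $c(\Delta^{(\tilde{\alpha})})$ (table entries 2, 4, 6, 8): here the three ingredients — the $(c_{0},Y)$ condition on $\hat{A}$, the row-wise bound (\ref{MF3}) and limit (\ref{MF4}) coming from $W^{(A_{n})}$, and the extra membership $\hat{A}e-(\gamma_{n})_{n}\in Y$ — must be merged into a single non-redundant list, and one must be careful that the limit operations (column limits of $\hat{A}$, the limits defining $\gamma_{n}$, and, when $Y=c$, the limit of the combined sequence $\hat{A}e-(\gamma_{n})_{n}$) are taken in the right order and are mutually consistent; the representation $Az=\hat{A}(Tz)-\eta(\gamma_{n})_{n}$ of Lemma \ref{operatornorm.1}(ii), i.e.\ identity (\ref{lem12}), is what guarantees this consistency. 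For the $\ell_{1}$-columns (entries 7 and 8) the only additional care is that the finite-subset form of the $(\cdot,\ell_{1})$ condition must be used rather than a naive $\sum_{n}\|\hat{A}_{n}\|_{1}<\infty$, matching the operator-norm estimates of Theorems \ref{operatornorm.2} and \ref{operatornorm.3}. All the remaining verifications are routine substitutions of the explicit $s_{jk}$ into known tables.
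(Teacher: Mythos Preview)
Your plan is essentially the paper's own proof: reduce via Lemma \ref{operatornorm.1} to $\hat{A}\in(X,Y)$ together with the $W^{(A_{n})}$-conditions, simplify the latter through Remark \ref{rem3.5}, and then invoke the classical Stieglitz--Tietz/Wilansky characterizations of $(X,Y)$ for $X\in\{\ell_{\infty},c_{0},c\}$ and $Y\in\{\ell_{\infty},c_{0},c,\ell_{1}\}$. One small correction: when $X=\ell_{\infty}$ you cannot invoke Remark \ref{rem3.5}(i), since $\ell_{\infty}$ does not have AK; for the $\ell_{\infty}(\Delta^{(\tilde{\alpha})})$ column the condition $W^{(A_{n})}\in(\ell_{\infty},c_{0})$ must be kept in its full form $\lim_{m}\sum_{k=0}^{m}|w_{mk}^{(A_{n})}|=0$ (the paper's (1B)) rather than weakened to the mere boundedness $\sup_{m}\sum_{k}|w_{mk}^{(A_{n})}|<\infty$---the paper handles the $\ell_{\infty}$ and $c_{0}$ domains separately for exactly this reason, even though its table labels them identically.
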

\begin{itemize}
	\item[[\textbf{1}] ] 1A and 1B where
	\begin{itemize}
		\item[(1A)] $\Vert A\Vert _{(c(\Delta ^{(  \widetilde{\alpha} )} ),\infty)}=\sup\limits_{n}\sum\limits_{k=0}^{\infty }\left\vert \hat{a}_{nk}\right\vert <\infty $,
		\item[(1B)]  $\Vert W^{(A_{n})}\Vert _{(\ell _{\infty},c_{0})}=\lim\limits_{m\rightarrow \infty}\sum\limits_{k=0}^{m}\left\vert w_{mk}^{(A_{n})}\right\vert =0$ for all $n$.
	\end{itemize}
	\item[[\textbf{2}]] 1A and 2A where
	\begin{itemize}
		\item[(2A)] $\Vert W^{(A_{n})}\Vert _{(\ell _{\infty },\ell _{\infty})}=\sup\limits_{m}\sum\limits_{k=0}^{m}\left\vert w_{mk}^{(A_{n})}\right\vert <\infty $ for all $n$.
	\end{itemize}
	\item[[\textbf{3}]] 1A, 2A, 3A and 3B where
	\begin{itemize}
		\item[(3A)] $\lim\limits_{m\rightarrow \infty}\sum\limits_{k=0}^{m}w_{mk}^{(n)}=\gamma _{n}$ exists for each $n$,
		\item[(3B)] $\sup\limits_{n}\left\vert \sum\limits_{k=0}^{\infty }\hat{a}_{nk}-\gamma _{n}\right\vert =0$.
	\end{itemize}
	\item[[\textbf{4}]] 1B and 4A where
	\begin{itemize}
		\item[(4A)] $ \lim\limits_{m\rightarrow \infty}\sum\limits_{k=0}^{m}\left\vert \hat{a}_{nk}\right\vert =0$.
	\end{itemize}
	\item[[\textbf{5}]] 1A, 2A and 5A where
	\begin{itemize}
		\item[(5A)] $\lim\limits_{n\rightarrow \infty }\hat{a}_{nk}=0$ for each $k$.
	\end{itemize}
	\item[[\textbf{6}]] 1A, 2A, 3A, 5A and 6A where
	\begin{itemize}
		\item[(6A)] $\lim\limits_{n\rightarrow \infty }\left( \sum\limits_{k=0}^{\infty }\hat{a}_{nk}-\gamma _{n}\right) =0$.
	\end{itemize}
	\item[[\textbf{7}] ] 1B, 7A, 7B and 7C where
	\begin{itemize}
		\item[(7A)] $\lim\limits_{n\rightarrow \infty }\hat{a}_{nk}=\hat{\alpha}_{k}$ exists for each $n$,
		\item[(7B)] $\sum\limits_{k=0}^{\infty }\left\vert \hat{a}_{nk}\right\vert ,\sum\limits_{k=0}^{\infty }\left\vert \hat{\alpha}_{k}\right\vert <\infty $ for all $n$,
		\item[(7C)] $\lim\limits_{n\rightarrow \infty }\left( \sum\limits_{k=0}^{\infty }\hat{a}_{nk}-\hat{\alpha}_{k}\right) =0$.
	\end{itemize}
	\item[[\textbf{8}] ] 1A, 2B and 7A.
	\item[[\textbf{9}]] 1A, 2B, 3A, 7A and 9A where
	\begin{itemize}
		\item[(9A)] $\lim\limits_{n\rightarrow \infty }\left( \sum\limits_{k=0}^{\infty }\hat{a}_{nk}-\gamma _{n}\right) =\delta$ exists. 
	\end{itemize}
	\item[[\textbf{10}]] 1B and 10A where
	\begin{itemize}
		\item[(10A)] $\sup\limits_{K\subset \mathbb{N}}\sum\limits_{n=0}^{\infty}\left\vert \sum\limits_{k\in K}\hat{a}_{nk}\right\vert <\infty $.
		\end{itemize}
		\item[[\textbf{11}] ] 2A and 10A.
		\item[[\textbf{12}]] 2A, 3A, 10A and 12A where
		\begin{itemize}
		\item[(12A)] $\sum\limits_{n=0}^{\infty }\left\vert \sum\limits_{k=0}^{\infty }\hat{a}_{nk}-\gamma _{n}\right\vert <\infty $, 
		\end{itemize}
	\end{itemize}
where for given a matrix $A=(a_{nk})_{n,k=0}^{\infty }$, we define the matrices $\hat{A}=(\hat{a}_{nk})_{n,k=0}^{\infty }$\ and $W^{(A_{n})}=(w_{mk}^{(A_{n})})_{m,k=0}^{\infty }$ by

\begin{equation}
\hat{a}_{nk}=\sum\limits_{j=k}^{\infty
}(-1)^{j-k}\frac{\Gamma(-\tilde{\alpha} +1)}{(j-k)!\Gamma(-\tilde{\alpha} -j+k+1)}a_{nj}\text{
	for all }n,k\in \mathbb{N}_{0}  \label{matriceA}
\end{equation}%
and
\begin{equation}
w_{mk}^{(A_{n})}=\left\{\begin{array}{cc}
\sum\limits_{j=m}^{\infty
}(-1)^{j-k}\frac{\Gamma(-\tilde{\alpha} +1)}{(j-k)!\Gamma(-\tilde{\alpha} +j-k+1)}a_{nj} & (0\leq k\leq m) \\
0 & (k>m)
\end{array}
\right. \label{matriceW}
\end{equation}

for $n,m\in \mathbb{N}_{0}$,

\begin{proof}
Note that the entries of the triangles $\hat{A}$ and
$W^{(A_{n})}$ are given above and assume that $Y\in \left\lbrace c_{0},c,\ell_{\infty},\ell_{1}\right\rbrace $. \\

\underline{[1], [4], [7], [10]} Taking into account Lemma \ref{operatornorm.1}(i) we have $A\in (\ell_{\infty }(\Delta ^{(  \tilde{\alpha} )} ),Y)$
if and only if $\hat{A}\in \left( \ell _{\infty },Y\right)$ and $W^{(A_{n})}\in \left( \ell _{\infty },c_{0}\right)$ for each $n$.
First,  $\hat{A}\in \left( \ell _{\infty },Y\right)$ satisfies (1A) in [1], by [\cite{Wil2}, Theorem 1.3.3], (4A) in [4] by [\cite{Stieglitz},  21. (21.1)], (7A) , (7B) and (7C) in [7] by [\cite{Wil2}, Theorem 1.7.18 (ii)], and (10A) in [10] by [\cite{Wil2}, 8.4.9A]. Also, $W^{(A_{n})}\in \left( \ell _{\infty },c_{0}\right)$ for all $n$ satisfies (2A) in [1], [4], [7], [10] by [\cite{Wil2}, Theorem 1.3.3].

\underline{[2], [5], [8], [11]} Remark \ref{rem3.5}(i) and Lemma \ref{operatornorm.1}(i) satisfy $A\in (\ell_{\infty }(\Delta ^{(  \tilde{\alpha} )} ),Y)$\ if and only if $\hat{A}\in \left(c_{0},Y\right) $ and $W^{(A_{n})}\in \left(c_{0},c_{0}\right) $ for each $n=0,1,\ldots $. First $\hat{A}\in \left( c_{0}, c_{0}\right)$  satisfies (1A) in [2] by [\cite{Wil2}, Theorem 1.3.3], (1A) and (5A) in [5] by [\cite{Wil2}, 8.4.5A], (1A) and (7A) in [8] by [\cite{Wil2}, 8.4.5A] and (10A)
in [11] by [15, 8.4.3B]. Also by [\cite{Wil2}, Theorem 1.3.3] $W^{(A_{n})}\in\left(c_{0},c_{0}\right)$ for all $  n $ satisfies (2A) in [2], [5], [8], [11].

\underline{[3], [6], [9], [12]} Remark \ref{rem3.5}(i) and Lemma \ref{operatornorm.1}(i) satisfy $A\in (c_{0}(\Delta ^{(  \tilde{\alpha} )} ),Y)$ if and only if $\hat{A}\in \left( c_{0}, Y\right)$ and $W^{(A_{n})}\in\left(c_{0},\ell_{\infty}\right)$ for each $n$, and 

\begin{equation*}
\lim\limits_{n\rightarrow \infty }W_{m}^{(A_{n})}=\lim_{m}\sum_{k=0}^{m}w_{mk}^{(A_{n})} \text{  for } n=0,1,\ldots 
\end{equation*}

and for each $ n $

\begin{equation*}
\left\lbrace \hat{A}-\left( \lim_{m}\sum_{k=0}^{m}w_{mk}^{(A_{n})}\right) \right\rbrace \in Y.
\end{equation*}

It means, we have to add the last two conditions to those for $A \in(c_{0 }(\Delta ^{(  \tilde{\alpha} )} ),Y)$, that is, (3A) and (3B) in [3] to those in [2], (3A) and (6A) in 6. to those in 5., (3A) and (9A) in [9] to those in [8] and (3A) and (12A) in [12] to those in [11].
\end{proof}

\section{Compact Operators on Fractional Spaces $c_{0}(\Delta ^{(  \tilde{\alpha} )} ), c(\Delta ^{(  \widetilde{\alpha} )} )$ and $\ell_{\infty} (\Delta ^{(  \widetilde{\alpha} )}  )$}
In this section, we give our main results related to compact operators on fractional sequence spaces. We recall the definition of the Hausdorff measure of noncompactness
of bounded subsets of a metric space, and the Hausdorff measure of
noncompactness of operators between Banach spaces.

If $X$ and $Y$ are infinite--dimensional complex Banach spaces then
a linear operator $L:X\rightarrow Y$ is said to be compact if the
domain of $L$ is
all of $X$, and, for every bounded sequence $(x_{n})$ in $X$, the sequence $%
(L(x_{n}))$ has a convergent subsequence. We denote the class of
such operators by $\mathcal{C}(X,Y)$.

Let $(X,d)$ be a metric space, $B(x_{0},\delta )=\{x\in
X:d(x,x_{0})<\delta \}$ denote the open ball of radius $\delta >0$
and center in $x_{0}\in X$, and $\mathcal{M}_{X}$ be the collection
of bounded sets in $X$. The Hausdorff measure of noncompactness of
$Q\in \mathcal{M}_{X}$ is
\begin{equation*}
\chi (Q)=\inf \{\epsilon > 0:Q\subset
\bigcup_{k=1}^{n}B(x_{k},\delta _{k}):x_{k}\in X,\ \delta
_{k}<\epsilon,\ 1\leq k \leq n,\ n\in\mathbb{N}\}.
\end{equation*}%
Let $X$ and $Y$ be Banach spaces and $\chi _{1}$ and $\chi _{2}$ be
measures of noncompactness on $X$ and $Y$. Then the operator
$L:X\rightarrow Y$ is called $(\chi _{1},\chi _{2})$--bounded if
$L(Q)\in \mathcal{M}_{Y}$ for every $Q\in \mathcal{M}_{X}$ and there
exists a positive constant $C$ such that
\begin{equation}
\chi _{2}(L(Q))\leq C\chi _{1}(Q)\mbox{ for every }Q\in
\mathcal{M}_{X}. \label{EmaFarAb.S.3.Eq.1}
\end{equation}%
If an operator $L$ is $(\chi _{1},\chi _{2})$--bounded then the
number
\begin{equation*}
\Vert L\Vert _{(\chi _{1},\chi _{2})}=\inf \left\{ C\geq 0:(\ref%
{EmaFarAb.S.3.Eq.1})\mbox{ holds for all }Q\in
\mathcal{M}_{X}\right\}
\end{equation*}%
is called the $(\chi _{1},\chi _{2})$--measure of noncompactness of
$L$. In particular, if $\chi _{1}=\chi _{2}=\chi $, then we write
$\Vert L\Vert _{\chi }$ instead of $\Vert L\Vert _{(\chi ,\chi )}$.

\begin{Lem} (\cite{mara}, Theorem 2.25 and Corollary 2.26)
\label{compactoperator}Let $X$ and $Y$ are Banach spaces and $L\in \mathcal{B%
}\left( X,Y\right) $. Then we have
\begin{equation}
\left\Vert L\right\Vert _{\chi }=\chi \left( L(\bar{B}_{X})\right)
=\chi \left( L(S_{X})\right) ,  \label{c13}
\end{equation}%
\begin{equation}
L\in \mathcal{C}(X,Y)\text{ if and only if }\left\Vert L\right\Vert
_{\chi }=0.  \label{c14}
\end{equation}%
\end{Lem}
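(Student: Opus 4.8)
The plan is to reduce the two displayed assertions to the standard algebraic properties of the Hausdorff measure of noncompactness $\chi$ on bounded subsets of a Banach space, namely: $\chi$ is monotone, $\chi(\bar{Q})=\chi(Q)$, $\chi$ is invariant under translations, $\chi(\lambda Q)=|\lambda|\chi(Q)$, $\chi(Q_{1}+Q_{2})\le\chi(Q_{1})+\chi(Q_{2})$ and $\chi(Q_{1}\cup Q_{2})=\max\{\chi(Q_{1}),\chi(Q_{2})\}$; and the fact that a bounded set $Q$ in a Banach space is relatively compact if and only if $\chi(Q)=0$. I would quote these from the monograph already relied upon in the paper and then argue in three short steps.

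\emph{Step 1: $\Vert L\Vert_{\chi}=\chi(L(\bar{B}_{X}))$.} For the inequality $\Vert L\Vert_{\chi}\le\chi(L(\bar{B}_{X}))$ I would check that $C:=\chi(L(\bar{B}_{X}))$ is an admissible constant in (\ref{EmaFarAb.S.3.Eq.1}). Given a bounded $Q\subset X$ and $\varepsilon>0$, cover $Q$ by finitely many balls $B(x_{i},\chi(Q)+\varepsilon)\subset x_{i}+(\chi(Q)+\varepsilon)\bar{B}_{X}$, $i=1,\dots,n$; then $L(Q)\subset\bigcup_{i=1}^{n}(L(x_{i})+(\chi(Q)+\varepsilon)L(\bar{B}_{X}))$, and translation invariance, homogeneity and the union formula give $\chi(L(Q))\le(\chi(Q)+\varepsilon)\chi(L(\bar{B}_{X}))$. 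Letting $\varepsilon\to0$ yields $\chi(L(Q))\le C\chi(Q)$, so $\Vert L\Vert_{\chi}\le C$. For the reverse inequality, take $Q=\bar{B}_{X}$ in (\ref{EmaFarAb.S.3.Eq.1}): since $\bar{B}_{X}\subset B(0,1+\delta)$ for every $\delta>0$ we have $\chi(\bar{B}_{X})\le1$, hence $\chi(L(\bar{B}_{X}))\le\Vert L\Vert_{\chi}\,\chi(\bar{B}_{X})\le\Vert L\Vert_{\chi}$.

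\emph{Step 2: $\chi(L(\bar{B}_{X}))=\chi(L(S_{X}))$.} One inequality follows from $S_{X}\subset\bar{B}_{X}$ and monotonicity. For the other, write each element of $\bar{B}_{X}$ as $tx$ with $t\in[0,1]$ and $x\in S_{X}$, so by linearity $L(\bar{B}_{X})=\{tL(x):t\in[0,1],\,x\in S_{X}\}$. Put $\rho:=\chi(L(S_{X}))$, fix $\varepsilon>0$, and cover $L(S_{X})$ by balls $B(y_{i},\rho+\varepsilon)$, $i=1,\dots,n$. The bounded set $\{ty_{i}:t\in[0,1],\,1\le i\le n\}$ lies in the finite-dimensional span of $y_{1},\dots,y_{n}$, hence is totally bounded and is covered by finitely many balls $B(z_{j},\varepsilon)$. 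Given $tL(x)\in L(\bar{B}_{X})$, pick $i$ with $\Vert L(x)-y_{i}\Vert<\rho+\varepsilon$ and $j$ with $\Vert ty_{i}-z_{j}\Vert<\varepsilon$; then $\Vert tL(x)-z_{j}\Vert\le t\Vert L(x)-y_{i}\Vert+\Vert ty_{i}-z_{j}\Vert<\rho+2\varepsilon$. Hence $\chi(L(\bar{B}_{X}))\le\rho+2\varepsilon$, and $\varepsilon\to0$ gives $\chi(L(\bar{B}_{X}))\le\chi(L(S_{X}))$; together with Step 1 this yields (\ref{c13}).

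\emph{Step 3 and the expected obstacle.} For (\ref{c14}), by definition $L$ is compact precisely when $L(\bar{B}_{X})$ is relatively compact in $Y$, which --- $Y$ being complete --- is equivalent to $\chi(L(\bar{B}_{X}))=0$, and by Step 1 this is the same as $\Vert L\Vert_{\chi}=0$. The step I expect to demand the most care is Step 2: passing from the sphere to the ball is not purely formal, and the key is that scaling a finite cover of $L(S_{X})$ by the interval $[0,1]$ keeps the enlarged set inside a finite-dimensional subspace, where total boundedness absorbs the extra $\varepsilon$. Step 1 is routine once the listed properties of $\chi$ are in hand, and I would be careful to invoke only $\chi(\bar{B}_{X})\le1$ rather than the true but unnecessary equality $\chi(\bar{B}_{X})=1$.
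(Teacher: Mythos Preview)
Your argument is correct, but there is nothing in the paper to compare it with: this lemma is not proved in the paper at all. It is stated as a quotation from \cite{mara}, Theorem~2.25 and Corollary~2.26, and immediately followed by the next cited lemma, with no proof or sketch supplied. So you have in fact written out the proof that the paper merely imports.

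On the mathematics itself, your three steps are the standard route and go through as written. The only cosmetic remark is that in Step~2 the representation $\bar{B}_{X}=\{tx:t\in[0,1],\,x\in S_{X}\}$ silently assumes $S_{X}\neq\emptyset$ (i.e.\ $X\neq\{0\}$), so that $0=0\cdot x$ is included; this is of course harmless. Your caution in Step~1 to use only $\chi(\bar{B}_{X})\le 1$ rather than equality is well placed, and Step~3 correctly invokes completeness of $Y$ to pass from $\chi(L(\bar{B}_{X}))=0$ to relative compactness.
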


\begin{Lem}
(Goldenstein, Gohberg, Markus \cite{mara}, Theorem 2.23) Let $X$ be
a Banach space with Schauder basis $\left( b_{n}\right) _{n=0}^{\infty }$, $%
Q\in \mathcal{M}_{X},$ $P_{n}:X\rightarrow X$ be the projector onto
the linear span of $\left\{ b_{0},b_{1},\ldots b_{n}\right\} $. $I$
be the identity map on $X$ and $R_{n}=I-P_{n}$ $(n=0,1,\dots)$.
Then we have%
\begin{equation}
\dfrac{1}{a}\cdot \limsup_{n\rightarrow \infty } \left( \sup_{x\in
Q}\left\Vert  R_{n} (x)\right\Vert \right) \leq \chi (Q)\leq
\limsup_{n\rightarrow \infty } \left( \sup_{x\in Q}\left\Vert R_{n}
(x)\right\Vert \right), \label{c16}
\end{equation}%
where $a=\limsup_{n\rightarrow \infty } \left\Vert
R_{n}\right\Vert$.
\end{Lem}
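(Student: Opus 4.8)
The plan is to establish the two inequalities separately, both resting on the splitting $x = P_n(x) + R_n(x)$ valid for every $x \in X$, together with the defining property of a Schauder basis, namely that $R_n(x) \to 0$ as $n \to \infty$ for each fixed $x \in X$. Write $r_n = \sup_{x \in Q}\|R_n(x)\|$, so the assertion is $a^{-1}\limsup_n r_n \le \chi(Q) \le \limsup_n r_n$. As a preliminary remark I would note that each $P_n$ is bounded (the coordinate functionals of a Schauder basis are continuous) and that $P_n(x)$ converges for every $x$, so the Banach--Steinhaus theorem gives $\sup_n\|P_n\| < \infty$; hence $a = \limsup_n\|R_n\| < \infty$, while $R_n(b_{n+1}) = b_{n+1} \neq 0$ forces $\|R_n\| \ge 1$, so $a \ge 1$ and the quantity $a^{-1}\limsup_n r_n$ makes sense.

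For the upper bound $\chi(Q) \le \limsup_n r_n$, I would fix $n$ and $\epsilon > 0$. The set $P_n(Q)$ is a bounded subset of the finite--dimensional space $\mathrm{span}\{b_0,\dots,b_n\}$, hence totally bounded, so there are $y_1,\dots,y_m \in X$ with $P_n(Q) \subset \bigcup_{j=1}^m B(y_j,\epsilon)$. For $x \in Q$, choosing $j$ with $\|P_n(x) - y_j\| < \epsilon$ gives $\|x - y_j\| \le \|P_n(x) - y_j\| + \|R_n(x)\| < \epsilon + r_n$, so $Q$ is covered by finitely many balls of radius $\epsilon + r_n$. Thus $\chi(Q) \le \epsilon + r_n$ for all $\epsilon > 0$, i.e. $\chi(Q) \le r_n$ for every $n$, and therefore $\chi(Q) \le \liminf_n r_n \le \limsup_n r_n$.

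For the lower bound, I would fix $\epsilon > 0$ and use the definition of $\chi(Q)$ to pick $x_1,\dots,x_m \in X$ with $Q \subset \bigcup_{j=1}^m B(x_j,\chi(Q)+\epsilon)$. Given $x \in Q$, select $j$ with $\|x - x_j\| < \chi(Q)+\epsilon$; then
\[
\|R_n(x)\| \le \|R_n\|\,\|x - x_j\| + \|R_n(x_j)\| \le \|R_n\|\,(\chi(Q)+\epsilon) + \max_{1\le i\le m}\|R_n(x_i)\| ,
\]
so $r_n \le \|R_n\|(\chi(Q)+\epsilon) + \max_{1\le i\le m}\|R_n(x_i)\|$. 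Since the set $\{x_1,\dots,x_m\}$ is fixed and $R_n(x_i) \to 0$ for each $i$, taking $\limsup_n$ annihilates the last term and leaves $\limsup_n r_n \le a(\chi(Q)+\epsilon)$; letting $\epsilon \to 0$ yields $\limsup_n r_n \le a\,\chi(Q)$, which is the desired inequality.

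I expect the lower bound to be the main obstacle, since it is precisely there that one must exploit that a \emph{finite} cover suffices in the definition of $\chi$: this is what makes the contribution of the centers $R_n(x_j)$ uniformly negligible for large $n$, leaving only the homogeneous term controlled by $\|R_n\| \le a + o(1)$. The upper bound, by contrast, needs nothing beyond total boundedness of bounded sets in finite dimensions.
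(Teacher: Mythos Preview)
Your proof is correct and is essentially the standard argument for the Goldenstein--Gohberg--Markus theorem. Note, however, that the paper does not give its own proof of this lemma: it is simply quoted from \cite{mara}, Theorem~2.23, and used as a tool later on, so there is no paper proof to compare against.
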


\begin{Lem}
(\cite{mara}, Theorem 2.8) Let $Q$\ be a bounded subset of the
normed space $X$, where $X$ is $\ell _{p}$\ for $1\leq p<\infty $ or $c_{0}$. If\ $%
P_{n}:X\rightarrow X$ is the operator defined by $P_{n}(x)=x^{[n]}$ for $%
x=(x_{k})_{k=0}^{\infty }\in X$, then we have

\begin{equation*}
\chi (Q)=\lim_{n}\left( \sup_{x\in Q}\left\Vert R_{n} (x)\right\Vert
\right).
\end{equation*}
\end{Lem}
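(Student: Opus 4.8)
The plan is to specialise the Goldenstein--Gohberg--Markus estimate (the immediately preceding lemma) to the two families of spaces in the statement. First I would recall that for $X=\ell_{p}$ with $1\leq p<\infty$ and for $X=c_{0}$ the canonical unit vectors $(e^{(k)})_{k=0}^{\infty}$ form a Schauder basis, and that the projector onto the linear span of $\{e^{(0)},\ldots,e^{(n)}\}$ is exactly the truncation map $x\mapsto x^{[n]}=P_{n}(x)$ appearing in the statement; hence $R_{n}=I-P_{n}$ acts by $R_{n}(x)=(0,\ldots,0,x_{n+1},x_{n+2},\ldots)$. The preceding lemma then gives, for every bounded $Q$,
$$\frac{1}{a}\,\limsup_{n\to\infty}\Big(\sup_{x\in Q}\|R_{n}(x)\|\Big)\leq\chi(Q)\leq\limsup_{n\to\infty}\Big(\sup_{x\in Q}\|R_{n}(x)\|\Big),\qquad a=\limsup_{n\to\infty}\|R_{n}\|.$$

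The next step is to compute $a$. For $x$ in either space, deleting finitely many coordinates cannot increase the $\ell_{p}$-norm or the sup-norm, so $\|R_{n}(x)\|\leq\|x\|$ and therefore $\|R_{n}\|\leq 1$; evaluating on $x=e^{(n+1)}$ gives $\|R_{n}\|\geq 1$, so $\|R_{n}\|=1$ for every $n$ and hence $a=1$. Substituting $a=1$ into the two-sided estimate collapses it to
$$\chi(Q)=\limsup_{n\to\infty}\Big(\sup_{x\in Q}\|R_{n}(x)\|\Big).$$

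Finally I would upgrade the $\limsup$ to a genuine limit. The tail norm is monotone in the truncation index: in the $\ell_{p}$ case $\|R_{n+1}(x)\|^{p}=\sum_{k>n+1}|x_{k}|^{p}\leq\sum_{k>n}|x_{k}|^{p}=\|R_{n}(x)\|^{p}$, and in the $c_{0}$ case $\|R_{n+1}(x)\|_{\infty}=\sup_{k>n+1}|x_{k}|\leq\sup_{k>n}|x_{k}|=\|R_{n}(x)\|_{\infty}$. Taking the supremum over $x\in Q$ preserves the inequality, so $n\mapsto\sup_{x\in Q}\|R_{n}(x)\|$ is non-increasing and bounded below by $0$; hence it converges, and its limit equals its $\limsup$. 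Combining this with the previous display yields $\chi(Q)=\lim_{n}\big(\sup_{x\in Q}\|R_{n}(x)\|\big)$, as claimed. The only mildly delicate point is the norm identity $\|R_{n}\|=1$, but it is immediate from the explicit form of the norms; the rest is bookkeeping with the preceding lemma.
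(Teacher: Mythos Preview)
Your argument is correct. Note, however, that the paper does not actually prove this lemma: it is quoted verbatim from \cite{mara}, Theorem 2.8, and stated without proof, exactly like the Goldenstein--Gohberg--Markus estimate that precedes it. So there is no ``paper's own proof'' to compare against.

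That said, your derivation is the natural one and is essentially how the result is obtained in the cited source: specialise the general Schauder-basis estimate to $\ell_{p}$ and $c_{0}$, observe that the tail projections satisfy $\Vert R_{n}\Vert=1$ (so the constant $a$ equals $1$ and the two-sided bound collapses to an equality), and then use the monotonicity of $n\mapsto\sup_{x\in Q}\Vert R_{n}(x)\Vert$ to replace the $\limsup$ by a limit. Each step is sound; nothing further is needed.
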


The final results of this section give the estimates of the
Hausdorff measure of noncompactness of $L_{A}$ when $A\in (X_{T},c)$
for $X=c_{0},\ell _{\infty },c$.

\begin{Lem}
( \cite{djolovic2}, Corollary 5.13) If $A\in \left(
(c_{0})_{T},c\right) $ or $A\in \left( (\ell _{\infty
})_{T},c\right) $\ then we have
\begin{equation}
\dfrac{1}{2}\cdot \lim\limits_{r\rightarrow \infty }\left(
\sup\limits_{n\geq r}\left\Vert \hat{A}_{n}-\hat{\alpha}\right\Vert
_{1}\right) \leq \Vert L_{A}\Vert _{\chi }\leq
\lim\limits_{r\rightarrow
\infty }\left( \sup\limits_{n\geq r}\left\Vert \hat{A}_{n}-\hat{\alpha}%
\right\Vert _{1}\right),  \label{c17}
\end{equation}%
where $\hat{\alpha}=\left( \alpha _{k}\right) _{k=0}^{\infty }$ with $\hat{%
\alpha}_{k}=\lim_{n\rightarrow \infty }\hat{a}_{nk}$ for
$k=0,1,\dots $
\end{Lem}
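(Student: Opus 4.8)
The plan is to reduce the operator $L_A$ on the matrix domain to the associated operator $L_{\hat A}$ on the base space and then apply the Goldenstein--Gohberg--Markus estimate \eqref{c16} in the codomain $c$, which carries a Schauder basis. Write $X=c_0$ or $X=\ell_\infty$ accordingly. By Lemma \ref{BKspace} the map $T\colon X_T\to X$ is a surjective linear isometry, so $T(\bar B_{X_T})=\bar B_X$; together with the identity $Az=\hat A(Tz)$ from Lemma \ref{operatornorm.1}(i) this gives
\begin{equation*}
L_A(\bar B_{X_T})=\{\hat A y:y\in\bar B_X\}=L_{\hat A}(\bar B_X).
\end{equation*}
Hence \eqref{c13} yields $\Vert L_A\Vert_\chi=\chi\big(L_A(\bar B_{X_T})\big)=\chi\big(L_{\hat A}(\bar B_X)\big)=\Vert L_{\hat A}\Vert_\chi$, and it suffices to estimate $\chi(Q)$ for $Q=L_{\hat A}(\bar B_X)\subset c$.

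Next I would apply \eqref{c16} to $Q$ inside $c$, using the Schauder basis $\{e,e^{(0)},e^{(1)},\dots\}$ of $c$. For $w\in c$ with $\ell=\lim_j w_j$ the remainder is $R_r(w)=(0,\dots,0,w_{r+1}-\ell,w_{r+2}-\ell,\dots)$, so $\Vert R_r(w)\Vert=\sup_{j>r}|w_j-\ell|$. Applying this to $w=\hat A y$, where $(\hat A y)_n=\hat A_n y$ and $\lim_n(\hat A y)_n=\hat\alpha y$, gives $\Vert R_r(\hat A y)\Vert=\sup_{n>r}|(\hat A_n-\hat\alpha)y|$. Taking the supremum over $y\in\bar B_X$ and interchanging the two suprema yields
\begin{equation*}
\sup_{y\in\bar B_X}\Vert R_r(\hat A y)\Vert=\sup_{n>r}\Vert\hat A_n-\hat\alpha\Vert_X^{\ast}=\sup_{n\geq r+1}\Vert\hat A_n-\hat\alpha\Vert_1,
\end{equation*}
where the last equality uses $\Vert\cdot\Vert_{c_0}^{\ast}=\Vert\cdot\Vert_{\ell_\infty}^{\ast}=\Vert\cdot\Vert_1$. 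As this quantity is nonincreasing in $r$, its $\limsup$ equals $\lim_r\sup_{n\geq r}\Vert\hat A_n-\hat\alpha\Vert_1$.

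Finally I would evaluate the constant $a=\limsup_r\Vert R_r\Vert$ for the basis of $c$. From $\Vert R_r(w)\Vert=\sup_{j>r}|w_j-\ell|\leq|w_j|+|\ell|\leq 2$ for $\Vert w\Vert\leq 1$, and from the test sequence equal to $1$ on $\{0,\dots,N\}$ and to $-1$ afterwards with $N>r$ (so $\ell=-1$ and $w_j-\ell=2$ for $r<j\leq N$), one gets $\Vert R_r\Vert=2$ for every $r$, hence $a=2$. Substituting $a=2$ and the above limit into \eqref{c16} gives precisely the two-sided bound \eqref{c17} for $\Vert L_{\hat A}\Vert_\chi$, which equals $\Vert L_A\Vert_\chi$ by the first paragraph. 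The main obstacle is the middle step: one must justify both that $\hat\alpha\in X^\beta$, so that $\hat\alpha y$ is defined, and that $\lim_n\hat A_n y=\hat\alpha y$ for every $y\in\bar B_X$. For $X=c_0$ this is immediate from the characterization of $(c_0,c)$, whereas for $X=\ell_\infty$ it rests on the Schur-type condition $\lim_n\Vert\hat A_n-\hat\alpha\Vert_1=0$ forced by membership in $(\ell_\infty,c)$, which makes the termwise limits uniform and incidentally forces $\Vert L_A\Vert_\chi=0$ in that case.
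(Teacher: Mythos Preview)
The paper does not supply its own proof of this lemma; it is quoted verbatim as Corollary~5.13 of \cite{djolovic2} and used as a black box. So there is nothing in the paper to compare your argument against line by line.

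That said, your proof is correct and is exactly the standard route by which such estimates are obtained (and essentially how the cited reference proceeds): reduce $L_A$ on $X_T$ to $L_{\hat A}$ on $X$ via the isometry $T$ and the identity $Az=\hat A(Tz)$, then apply the Goldenstein--Gohberg--Markus inequality \eqref{c16} in $c$ with the basis $\{e,e^{(0)},e^{(1)},\dots\}$. Your computation of $\Vert R_r\Vert=2$ and of $\sup_{y\in\bar B_X}\Vert R_r(\hat A y)\Vert=\sup_{n>r}\Vert\hat A_n-\hat\alpha\Vert_1$ is right, and you correctly flag and dispatch the one genuine issue, namely that $\lim_n\hat A_n y=\hat\alpha y$ must be justified separately for $X=c_0$ (via the $(c_0,c)$ characterization plus Fatou to get $\hat\alpha\in\ell_1$) and for $X=\ell_\infty$ (via the Schur-type condition $\lim_n\Vert\hat A_n-\hat\alpha\Vert_1=0$ built into $(\ell_\infty,c)$, which in fact collapses \eqref{c17} to $\Vert L_A\Vert_\chi=0$). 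The only cosmetic point is the indexing of $R_r$ (whether the remainder starts at $n>r$ or $n\geq r$ depends on how one numbers the basis $e,e^{(0)},e^{(1)},\dots$), but this is irrelevant in the limit $r\to\infty$.
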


\begin{Lem}
( \cite{djolovic2}, Corollary 5.14) If $A\in \left( c_{T},c\right)
$\
then we have%
\begin{eqnarray}
&&\dfrac{1}{2}\cdot \lim\limits_{r\rightarrow \infty }\left(
\sup\limits_{n\geq r}\left( \left\vert \beta -\delta
_{n}-\sum\limits_{k=0}^{\infty }\alpha _{k}\right\vert
+\sum\limits_{k=0}^{\infty }\left\vert \hat{a}_{nk}-\alpha
_{k}\right\vert
\right) \right)  \label{c18} \\
&\leq &\Vert L_{A}\Vert _{\chi }\leq \lim\limits_{r\rightarrow
\infty }\left( \sup\limits_{n\geq r}\left( \left\vert \beta -\delta
_{n}-\sum\limits_{k=0}^{\infty }\alpha _{k}\right\vert
+\sum\limits_{k=0}^{\infty }\left\vert \hat{a}_{nk}-\alpha
_{k}\right\vert \right) \right),  \notag
\end{eqnarray}%
where $\gamma _{n}=\lim_{m\rightarrow \infty }w_{mk}^{(A_{n})}$ for $%
n=0,1,\dots $, $\beta =\lim_{n\rightarrow \infty }(\sum_{k=0}^{\infty }\hat{a%
}_{nk}-\gamma _{n})$ and $\hat{\alpha}=\left( \alpha _{k}\right)
_{k=0}^{\infty }$ with $\hat{\alpha}_{k}=\lim_{n\rightarrow \infty }\hat{a}%
_{nk}$ for $k=0,1,\dots $.
\end{Lem}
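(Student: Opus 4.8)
The plan is to derive the two-sided estimate \eqref{c18} from the Goldenstein--Gohberg--Markus lemma \eqref{c16}, applied to $Q=L_A(\bar B_{c_T})$, together with the representation of $L_A$ supplied by Lemma \ref{operatornorm.1}(ii). First, since $A\in(c_T,c)\subset\mathcal{B}(c_T,c)$, I would use \eqref{c13} to replace $\|L_A\|_\chi$ by $\chi\big(L_A(\bar B_{c_T})\big)$. Both $c$ and $c_T$ (the latter by Remark \ref{rem1}, as $c$ has a basis) are $BK$ spaces with a Schauder basis, and for the standard basis $\{e,e^{(0)},e^{(1)},\dots\}$ of $c$ the projections $P_r$ onto the span of the first $r+1$ basis vectors satisfy $(I-P_r)w=\sum_{n\ge r}(w_n-\xi)e^{(n)}$ with $\xi=\lim_k w_k$, so that $\|(I-P_r)w\|=\sup_{n\ge r}|w_n-\lim_k w_k|$ and $\|I-P_r\|=2$. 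Hence \eqref{c16} reduces the problem to evaluating $\sup_{z\in\bar B_{c_T}}\sup_{n\ge r}\big|(Az)_n-\lim_m(Az)_m\big|$ and passing to $\limsup_{r\to\infty}$, the factor $\tfrac12$ on the lower side being exactly $1/\limsup_r\|I-P_r\|$.

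Next I would unwind the representation. By Lemma \ref{operatornorm.1}(ii) one has $\hat A\in(c_0,c)$, $\hat A e-(\gamma_n)_n\in c$, and $Az=\hat A(Tz)-\eta\,(\gamma_n)_n$ with $\eta=\lim_k(Tz)_k$; writing $w=Tz$, $\xi=\lim_k w_k$ and $w=\xi e+u$ with $u\in c_0$, this becomes $(Az)_n=\xi\big(\sum_k\hat a_{nk}-\gamma_n\big)+\sum_k\hat a_{nk}u_k$. Since $\hat A\in(c_0,c)$ gives $\sup_n\sum_k|\hat a_{nk}|<\infty$ and $\hat a_{nk}\to\alpha_k$ (so $(\alpha_k)\in\ell_1$), a Silverman--Toeplitz argument yields $\lim_n\sum_k\hat a_{nk}u_k=\sum_k\alpha_k u_k$, and together with $\lim_n\big(\sum_k\hat a_{nk}-\gamma_n\big)=\beta$ this gives $\lim_m(Az)_m=\xi\beta+\sum_k\alpha_k u_k=:\Phi(z)$. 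Therefore $(Az)_n-\Phi(z)=\xi\big(\sum_k\hat a_{nk}-\gamma_n-\beta\big)+\sum_k(\hat a_{nk}-\alpha_k)(w_k-\xi)$, which for each fixed $n$ is a bounded linear functional $\psi_n$ of $w\in c$. As $T$ is a surjective linear isometry of $c_T$ onto $c$, $\sup_{z\in\bar B_{c_T}}|(Az)_n-\Phi(z)|=\|\psi_n\|_{c^\ast}$; evaluating $\psi_n$ on $e$ and on the $e^{(k)}$ and using $\|\psi\|_{c^\ast}=\big|\psi(e)-\sum_k\psi(e^{(k)})\big|+\sum_k|\psi(e^{(k)})|$ for the dual norm of $c$, I obtain $\sup_{z\in\bar B_{c_T}}|(Az)_n-\Phi(z)|=\big|\beta-\delta_n-\sum_k\alpha_k\big|+\sum_k|\hat a_{nk}-\alpha_k|$, which is precisely the bracketed quantity of \eqref{c18} (the constant $\delta_n$ being the one determined by $\gamma_n$ in the statement).

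Finally, substituting this identity so that $\sup_{z\in\bar B_{c_T}}\|(I-P_r)(Az)\|=\sup_{n\ge r}\big(|\beta-\delta_n-\sum_k\alpha_k|+\sum_k|\hat a_{nk}-\alpha_k|\big)$ and taking $\limsup_{r\to\infty}$ in the estimate of the first paragraph would deliver \eqref{c18}. The hard part is the middle step: justifying the interchange $\lim_n\sum_k\hat a_{nk}u_k=\sum_k\alpha_k u_k$ rigorously — a Silverman--Toeplitz estimate resting on $\sup_n\sum_k|\hat a_{nk}|<\infty$ and the membership conditions recorded in Theorem \ref{EMAPAPER.S.4.1} — and then tracking precisely the constant contributed by $\xi\big(\sum_k\hat a_{nk}-\gamma_n-\beta\big)$, so that the $c^\ast$-norm bookkeeping reproduces exactly the bracketed expression of \eqref{c18} and not merely an equivalent one; note also that, unlike the $c_0$- and $\ell_p$-valued cases, $c$ as target forces the genuinely two-sided form \eqref{c16} of the lemma, whence the irremovable factor $\tfrac12$.
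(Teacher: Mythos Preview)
The paper does not give its own proof of this lemma: it is quoted verbatim as Corollary~5.14 of \cite{djolovic2} and used as a black box. So there is no in-paper argument to compare against; I can only assess your sketch on its own terms.

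Your route is the standard one and is essentially the argument behind the cited corollary. The three ingredients are exactly right: (i) apply \eqref{c13} and the Goldenstein--Gohberg--Markus estimate \eqref{c16} to $Q=L_A(\bar B_{c_T})$ in the target space $c$ with its basis $\{e,e^{(0)},e^{(1)},\dots\}$, where indeed $\limsup_r\|I-P_r\|=2$ produces the factor $\tfrac12$; (ii) feed in the representation $Az=\hat A(Tz)-\eta(\gamma_n)_n$ from Lemma~\ref{operatornorm.1}(ii), split $Tz=\xi e+u$ with $u\in c_0$, and identify $\lim_m(Az)_m=\xi\beta+\sum_k\alpha_ku_k$ via the $(c_0,c)$ conditions on $\hat A$; (iii) read off $\sup_{\|z\|\le1}|(Az)_n-\lim_m(Az)_m|$ as a $c^\ast$-norm using the isometry $T:c_T\to c$ and the formula $\|\psi\|_{c^\ast}=|\psi(e)-\sum_k\psi(e^{(k)})|+\sum_k|\psi(e^{(k)})|$. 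Your bookkeeping in step~(iii) is correct and yields $\big|\sum_k\alpha_k-\gamma_n-\beta\big|+\sum_k|\hat a_{nk}-\alpha_k|$; the discrepancy with the displayed $|\beta-\delta_n-\sum_k\alpha_k|$ is purely notational (the lemma as stated never defines $\delta_n$, and comparison with Theorem~\ref{generaltheorem} shows the intended quantity is the one you obtained, up to the sign convention inside the absolute value). The ``hard part'' you flag---the interchange $\lim_n\sum_k\hat a_{nk}u_k=\sum_k\alpha_ku_k$---is routine once $\hat A\in(c_0,c)$ is known: split the sum at a large index $K$, use $\sup_n\sum_k|\hat a_{nk}|<\infty$ on the tail and pointwise convergence on the finite part. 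No gap.
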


We now establish necessary and sufficient conditions
for a matrix operator to be a compact operator from fractional difference sequence spaces into $Y$, where $Y \in \left\lbrace c_{0},c,\ell_{\infty},\ell_{1}\right\rbrace $.
This is achieved applying the results given above about
Hausdorff measure of noncompactness.

\begin{Thm}\label{generaltheorem}
The identities or estimates
for $L_{A}$ when $A\in(\ell_{\infty} (\Delta ^{(  \widetilde{\alpha} )}  ),Y)$, $A\in(c_{0}(\Delta ^{(  \tilde{\alpha} )} ),Y)$ and $A\in(c(\Delta ^{(  \widetilde{\alpha} )} ),Y)$, where $Y\in \left\lbrace \ell_{\infty},c_{0},c,\ell_{1}\right\rbrace $ can be read from the following
table:

\begin{center}
\begin{tabular}{||l|c|c|c||}\hline
    \begin{tabular}[t]{lr}
        & From\\
        To &
    \end{tabular}
    &  $\ell_{\infty} (\Delta ^{(  \widetilde{\alpha} )}  )$ & $c_{0}(\Delta ^{(  \tilde{\alpha} )} )$ & $c(\Delta ^{(  \tilde{\alpha} )} )$\\
    $\ell_{\infty}$ & {\bf 1.} & {\bf 1.} & {\bf 2.}\\
    $c_{0}$ & {\bf 3.} & {\bf 3.} & {\bf 4.}\\
    $c$ & {\bf 5.} & {\bf 5.} & {\bf 6.}\\
    $\ell_{1}$ & {\bf 7.} & {\bf 7.} & {\bf 8.} \\\hline
\end{tabular}
\end{center}
Here
\par
\begin{tabular}{l@{\quad}l}
    {\bf 1.} & 
    $0\leq \Vert L_{A}\Vert _{\chi }\leq \lim\limits_{r\rightarrow \infty}\left( \sup\limits_{n\geq r}\sum\limits_{k=0}^{\infty }\left\vert \hat{a}
    _{nk}\right\vert \right) $;\\
    {\bf 2.} &
    $0\leq \Vert L_{A}\Vert _{\chi }\leq \lim\limits_{r\rightarrow \infty
    }\left( \sup\limits_{n\geq r}\sum\limits_{k=0}^{\infty }\left\vert \hat{a}
    _{nk}\right\vert +\left\vert \gamma _{n}\right\vert \right) $;\\
    {\bf 3.} &
    $\Vert L_{A}\Vert _{\chi }=\lim\limits_{r\rightarrow \infty}\left\Vert \hat{A}^{[r]}\right\Vert _{(\ell _{\infty },\ell _{\infty })}$;\\
    {\bf 4.} & 
    $\Vert L_{A}\Vert _{\chi }=\lim\limits_{r\rightarrow \infty }\left(\sup\limits_{n\geq r}\sum\limits_{k=0}^{\infty }\left\vert \hat{a}_{nk}\right\vert +\left\vert \gamma _{n}\right\vert \right) $;\\
    {\bf 5.} &
    $\dfrac{1}{2}\cdot \lim\limits_{r\rightarrow \infty }\left\Vert \hat{B}^{[r]}\right\Vert _{((\ell _{\infty },\ell _{\infty })}\leq \Vert L_{A}\Vert
    _{\chi }\leq \lim\limits_{r\rightarrow \infty }\left\Vert \hat{B}^{[r]}\right\Vert _{((\ell _{\infty },\ell _{\infty })}$;\\
    {\bf 6.} &
    $\dfrac{1}{2}\cdot \lim\limits_{r\rightarrow \infty }\left(
    \sup\limits_{n\geq r}\sum\limits_{k=0}^{\infty }\left\vert \hat{b}
    _{nk}\right\vert +\left\vert \delta _{n}\right\vert \right) \leq
    \Vert L_{A}\Vert _{\chi }\leq \lim\limits_{r\rightarrow \infty
    }\left(\sup\limits_{n\geq r}\sum\limits_{k=0}^{\infty }\left\vert \hat{b}
    _{nk}\right\vert +\left\vert \delta _{n}\right\vert \right) $;\\
    {\bf 7.} & 
    $\lim\limits_{r\rightarrow \infty }\sup\limits_{\substack{ N\subset\mathbb{N} _{0} \\ \text{finite}}}\left\Vert \sum\limits_{n\in
	\mathbb{N} }\hat{A}_{n}^{[r]}\right\Vert _{1}\leq \Vert L_{A}\Vert
	_{\chi }\leq 4\lim\limits_{r\rightarrow \infty
	}\sup\limits_{\substack{ N\subset \mathbb{N} _{0} \\
	\text{finite}}}\left\Vert \sum\limits_{n\in \mathbb{N}
	}\hat{A}_{n}^{[r]}\right\Vert _{1}$
	;\\
    {\bf 8.} & 
    $\lim\limits_{r\rightarrow \infty }\sup\limits_{\substack{ N\subset
	\mathbb{N}_{0} \\ \text{finite}}}\left( \left\Vert \sum\limits_{n\in \mathbb{
	N}}\hat{A}_{n}^{[r]}\right\Vert _{1}+\left\vert \sum\limits_{n\in
	N}\gamma _{n}\right\vert \right) $

	$\leq \Vert L_{A}\Vert _{\chi }\leq 4\lim\limits_{r\rightarrow
	\infty }\sup\limits_{\substack{ N\subset \mathbb{N}_{0} \\
	\text{finite}}}\left( \left\Vert \sum\limits_{n\in
	\mathbb{N}}\hat{A}_{n}^{[r]}\right\Vert _{1}+\left\vert
	\sum\limits_{n\in N}\gamma _{n}\right\vert \right) $,
\end{tabular}
where the notations used in the theorem are defined as follows:

Let $A=(a_{nk})_{n,k=0}^{\infty }$ be an infinite matrix and $r\in \mathbb{N}
_{0}$. Then $A^{[r]}$ denotes the matrix with rows $A_{n}^{[r]}=0$\ for $%
0\leq n\leq r$ and $A_{n}^{[r]}=A_{n}$\ for $n\geq r+1$.

We write $\hat{A}$ for the matrix with
\begin{equation*}
\hat{a}_{nk}=\sum\limits_{j=k}^{\infty
}(-1)^{j-k}\frac{\Gamma(-\tilde{\alpha} +1)}{(j-k)!\Gamma(-\tilde{\alpha} -j+k+1)}a_{nj}\text{
	for all }n,k\in \mathbb{N}_{0}\text{;}
\end{equation*}

and $\hat{\alpha}=(\hat{\alpha}_{k})_{k=0}^{\infty }$ and $\gamma
=(\gamma _{n})_{n=0}^{\infty }$\ for the sequences with
$\hat{\alpha}_{k}=\lim_{n\rightarrow \infty }\hat{a}_{nk} $ for
$k=0,1,\ldots $ and
\begin{equation*}
\gamma _{n}=\lim\limits_{m}\sum\limits_{k=0}^{m}w_{mk}^{(A_{n})}=\lim\limits_{m}\sum\limits_{k=0}^{m}%
\sum\limits_{j=m}^{\infty
}(-1)^{j-m}\frac{\Gamma(-\tilde{\alpha} +1)}{(j-m)!\Gamma(-\tilde{\alpha} -j+m+1)}a_{nj}\text{;}
\end{equation*}%
also $\beta =\lim_{n\rightarrow \infty }( \sum_{k=0}^{\infty }%
\hat{a}_{nk}-\gamma _{n})$. We also write $\hat{B}=(\hat{b}_{nk})_{n,k=0}^{\infty }$ for the matrix with $\hat{b}_{nk}=\hat{a}_{nk}-\hat{\alpha}_{k}$ for each
$n,k\in \mathbb{N}_{0}$ and $\delta =(\delta _{n})_{n=0}^{\infty }$
for the sequence with $ \delta _{n}=\sum_{k=0}^{\infty }\hat{\alpha}_{k}-\gamma
_{n}+\beta \text{ }(n=0,1,\ldots ) $

\end{Thm}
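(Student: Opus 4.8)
The plan is to treat each of the twelve table entries by the same three-move strategy: (1) rewrite $L_A$ as an operator factoring through a classical sequence space via the $\Delta^{(\tilde\alpha)}$-transform, using Lemma \ref{operatornorm.1}; (2) invoke the abstract Hausdorff-measure estimates already recorded as Lemmas leading up to \eqref{c16}, \eqref{c17}, \eqref{c18}; and (3) translate the resulting quantities back into the matrices $\hat A$, $W^{(A_n)}$, $\hat B$ and the sequences $\hat\alpha$, $\gamma$, $\delta$ defined at the end of the statement. The organizing principle is the same as in the proof of Theorem \ref{EMAPAPER.S.4.1}: by Lemma \ref{operatornorm.1}(i), for $X=c_0$ or $X=\ell_\infty$ one has $A\in(X_T,Y)$ iff $\hat A\in(X,Y)$ with the extra $W^{(A_n)}$ side-conditions, and moreover $Az=\hat A(Tz)$ for all $z\in X_T$; for $X=c$ one uses Lemma \ref{operatornorm.1}(ii) with the correction term $\eta(\gamma_n)$. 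Since $T=\Delta^{(\tilde\alpha)}$ is a norm isometry of $X_T$ onto $X$ (Lemma \ref{BKspace} and the norm formula), the Hausdorff measures of $L_A$ on $X_T$ and of $L_{\hat A}$ (resp.\ the affine map $z\mapsto \hat A z - \eta\gamma$) on $X$ coincide, so every estimate is inherited verbatim from the classical case.

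Concretely I would proceed column by column. For the columns from $\ell_\infty(\Delta^{(\tilde\alpha)})$ and $c_0(\Delta^{(\tilde\alpha)})$ into $\ell_\infty$ (entry \textbf{1}), the image $L_A(\bar B_X)$ is bounded and one only gets the one-sided bound $\|L_A\|_\chi\le\limsup_r\sup_{n\ge r}\|\hat A_n\|_1$ by estimating $\chi$ of the image of the unit ball directly through its tail rows $\hat A^{[r]}$; this is the reason entries \textbf{1} and \textbf{2} carry only upper bounds. For the target $c_0$ (entries \textbf{3}, \textbf{4}) one uses the Lemma about $P_n(x)=x^{[n]}$ on $c_0$, which gives the exact identity $\chi(Q)=\lim_n\sup_{x\in Q}\|R_n x\|$, yielding the clean equalities $\|L_A\|_\chi=\lim_r\|\hat A^{[r]}\|_{(\ell_\infty,\ell_\infty)}$ and $\lim_r(\sup_{n\ge r}(\sum_k|\hat a_{nk}|+|\gamma_n|))$. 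For the target $c$ (entries \textbf{5}, \textbf{6}) one quotes Lemma (\cite{djolovic2}, Cor.\ 5.13 and 5.14): entries \textbf{5} and \textbf{6} come from \eqref{c17} and \eqref{c18} respectively after substituting $\hat A$, identifying $\hat\alpha_k=\lim_n\hat a_{nk}$, $\hat B=\hat A-\hat\alpha$, $\beta=\lim_n(\sum_k\hat a_{nk}-\gamma_n)$ and $\delta_n=\sum_k\hat\alpha_k-\gamma_n+\beta$, so that $\beta-\delta_n-\sum_k\alpha_k=-\gamma_n+\beta-\beta=\dots$ collapses to $\delta_n$ up to sign—checking this bookkeeping is a short computation. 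For the target $\ell_1$ (entries \textbf{7}, \textbf{8}) one uses the known two-sided estimate $\|L_A\|_{(X_T,1)}\le\|L_A\|_\chi\le 4\|L_A\|_{(X_T,1)}$-type bound from (\cite{djolovıc}, Thm.\ 2.8--2.9) applied in the tail, replacing $A^{[r]}$ by $\hat A^{[r]}$ and adding $|\sum_{n\in N}\gamma_n|$ in the $c$-domain case.

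The main obstacle is the $c$-domain case (entries \textbf{2}, \textbf{6}, \textbf{8}): there Lemma \ref{operatornorm.1}(ii) forces the affine correction $z\mapsto \hat A(Tz)-\eta(\gamma_n)$ rather than a genuine matrix operator, so one must either pass to $c_0$ (which has $AK$ and on which the projection lemma applies) and handle the limit functional $\eta=\lim_k(Tz)_k$ separately, or appeal directly to the pre-packaged Lemma from \cite{djolovic2} that already absorbs this term; verifying that the constants $\gamma$, $\beta$, $\delta$ match between that reference's normalization and the one fixed at the end of this theorem is the delicate point. A secondary technical check is that all the $W^{(A_n)}\in(X,c_0)$ (or $(c,c)$) conditions, which are needed merely for $A$ to be in the relevant matrix class, do not affect $\|L_A\|_\chi$—this is immediate once one knows $Az$ depends only on $\hat A$ (and $\gamma$) via the identity in Lemma \ref{operatornorm.1}. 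Granting the cited lemmas, no genuinely new analysis is required; the proof is a systematic transport of twelve classical results through the fixed triangle $\Delta^{(\tilde\alpha)}$ and a careful matching of notation.
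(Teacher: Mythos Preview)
Your proposal is correct and follows essentially the same approach as the paper: factor $L_A$ through the triangle $T=\Delta^{(\tilde\alpha)}$ via Lemma \ref{operatornorm.1}, then import the classical Hausdorff-measure estimates (the projection lemma for $c_0$, the Goldenstein--Gohberg--Markus inequality, and the corollaries \eqref{c17}, \eqref{c18} from \cite{djolovic2}), with the $c$-domain cases picking up the affine correction $(\gamma_n)$. If anything your outline is more complete than the paper's own proof, which treats only entries \textbf{1.}--\textbf{6.} explicitly (citing \cite{djolovıc}, Corollary 3.6(a) and Theorem 3.7) and leaves \textbf{7.}, \textbf{8.} unaddressed; the bookkeeping you flag for matching $\delta_n$ in \textbf{6.} with the quantity $|\beta-\delta_n-\sum_k\alpha_k|$ appearing in \eqref{c18} is indeed the only point requiring care.
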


\begin{proof}

The conditions in {\bf 1.} and {\bf 2.} are immediate consequence of
(\cite{djolovıc}, Corollary 3.6(a)). We define $P_{r}:\ell _{\infty
}\rightarrow \ell _{\infty }$ by $P_{r}(x)=x^{[r]}$ for all $x \in
\ell _{\infty }$ and $r=0,1,\dots$, $R_{r}=I-P_{r}$, and write
$L=L_{A}$ and $\bar{B}=\bar{B}_{\ell _{\infty }}$ for short. Then it
follows from (\ref{compactoperator}), (\cite{mara}, Theorem 2.12)
and Lemma \ref{operatornorm.2}(i) that

\begin{eqnarray*}
0 &\leq &\left\Vert L\right\Vert _{\chi }=\chi (L(\bar{B})) \\
&\leq &\chi (P_{r}(L(\bar{B})))+\chi (R_{r}(L(\bar{B}))) \\
&=&\chi (R_{r}(L(\bar{B})))\leq \sup_{x\in \bar{B}}\left\Vert
R_{r}(L(x))\right\Vert _{\infty }=\left\Vert
\hat{A}^{[r]}\right\Vert _{(X,\infty )}.
\end{eqnarray*}

Then, {\bf 3.} holds.

The conditions in {\bf 4.} and {\bf 6.} are immediate consequence of
(\cite{djolovıc}, Theorem 3.7 (b), (a)). Part {\bf 5.} follows by a
similar argument as part {\bf 3.}; we use Lemma \ref{operatornorm.3}(i)
instead of Lemma \ref{operatornorm.2}(i).
\end{proof}

\begin{Cor}
Let $X$ be one of the spaces $c_{0}(\Delta ^{(  \tilde{\alpha} )} )$ or\
$\ell_{\infty} (\Delta ^{(  \widetilde{\alpha} )}  )$. We obtain as an immediate consequence
of (\ref{c14}) and Theorem \ref{generaltheorem} ((1.1), (3.1) and
(5.1)).

\begin{itemize}
\item[(i) ] If $A\in (X,c_{0})$, then $L_{A}$ is compact if and only if
\begin{equation}
\lim\limits_{r\rightarrow \infty }\left( \sup\limits_{n\geq r}\left(
\sum\limits_{k=0}^{\infty}\left\vert \hat{a}_{nk}\right\vert \right) \right) =0.
\label{comp2}
\end{equation}

\item[(ii)] If $A\in (X,c)$, then $L_{A}$ is compact if and only if
\begin{equation*}
\lim\limits_{r\rightarrow \infty }\sup\limits_{n\geq r}\left(
\sum\limits_{k=0}^{\infty}\left\vert \hat{a}_{nk}-%
\hat{\alpha}_{k}\right\vert \right) =0.
\end{equation*}

\item[(iii)] If $A\in (X,\ell _{\infty })$, then $L_{A}$ is
compact if the condition\ (\ref{comp2}) holds.
\end{itemize}
\end{Cor}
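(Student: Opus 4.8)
The plan is to derive the Corollary directly from equation (\ref{c14}) together with the relevant cases of Theorem \ref{generaltheorem}. Recall that (\ref{c14}) states that $L_{A}\in\mathcal{C}(X,Y)$ if and only if $\Vert L_{A}\Vert_{\chi}=0$, so in each of the three parts I simply need to translate "the relevant bound in Theorem \ref{generaltheorem} equals zero'' into the stated limit condition. First I would treat part (i): for $A\in(X,c_{0})$ with $X=c_{0}(\Delta^{(\tilde\alpha)})$ or $X=\ell_{\infty}(\Delta^{(\tilde\alpha)})$, the entry labelled \textbf{3.} of Theorem \ref{generaltheorem} gives the identity $\Vert L_{A}\Vert_{\chi}=\lim_{r\to\infty}\Vert\hat{A}^{[r]}\Vert_{(\ell_{\infty},\ell_{\infty})}$. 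Since $\Vert\hat{A}^{[r]}\Vert_{(\ell_{\infty},\ell_{\infty})}=\sup_{n\geq r+1}\sum_{k}|\hat a_{nk}|$ by the standard formula for the $(\ell_{\infty},\ell_{\infty})$ operator norm, setting this limit to $0$ is exactly (\ref{comp2}). Combining with (\ref{c14}) yields the "if and only if''.

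Next I would handle part (ii): for $A\in(X,c)$, the entry \textbf{5.} of Theorem \ref{generaltheorem} gives the two-sided estimate
\begin{equation*}
\tfrac{1}{2}\lim_{r\to\infty}\Vert\hat{B}^{[r]}\Vert_{(\ell_{\infty},\ell_{\infty})}\leq\Vert L_{A}\Vert_{\chi}\leq\lim_{r\to\infty}\Vert\hat{B}^{[r]}\Vert_{(\ell_{\infty},\ell_{\infty})},
\end{equation*}
where $\hat b_{nk}=\hat a_{nk}-\hat\alpha_{k}$. Both bounds vanish simultaneously, so $\Vert L_{A}\Vert_{\chi}=0$ if and only if $\lim_{r\to\infty}\Vert\hat{B}^{[r]}\Vert_{(\ell_{\infty},\ell_{\infty})}=0$, which written out is $\lim_{r\to\infty}\sup_{n\geq r}\sum_{k}|\hat a_{nk}-\hat\alpha_{k}|=0$. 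Applying (\ref{c14}) again finishes part (ii). For part (iii), with $A\in(X,\ell_{\infty})$, the entry \textbf{1.} of Theorem \ref{generaltheorem} gives only the one-sided bound $0\leq\Vert L_{A}\Vert_{\chi}\leq\lim_{r\to\infty}\sup_{n\geq r}\sum_{k}|\hat a_{nk}|$; hence if (\ref{comp2}) holds then $\Vert L_{A}\Vert_{\chi}=0$, so $L_{A}$ is compact by (\ref{c14}). This is only a sufficient condition, matching the asymmetry already built into the statement.

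There is essentially no serious obstacle here, since the Corollary is a bookkeeping consequence of Theorem \ref{generaltheorem} and Lemma \ref{compactoperator}. The only point requiring a line of justification is the identification $\Vert\hat{A}^{[r]}\Vert_{(\ell_{\infty},\ell_{\infty})}=\sup_{n\geq r+1}\sum_{k}|\hat a_{nk}|$ (and similarly for $\hat B$), which follows from the well-known characterization $\Vert M\Vert_{(\ell_{\infty},\ell_{\infty})}=\sup_{n}\sum_{k}|m_{nk}|$ applied to the matrix $\hat A^{[r]}$ whose first $r$ rows are zero; this is exactly the content of the norm formula used in entry \textbf{3.} and is recorded in \cite[Theorem 1.3.3]{Wil2}. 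With that identification in place, each of the three parts is a one-line deduction, so I would simply state these three deductions cleanly and cite (\ref{c14}), (\ref{c17}) (equivalently entries \textbf{1.}, \textbf{3.}, \textbf{5.} of Theorem \ref{generaltheorem}) as the inputs.
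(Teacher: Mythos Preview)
Your proposal is correct and follows exactly the route the paper intends: the Corollary is stated there as an immediate consequence of (\ref{c14}) and the entries \textbf{1.}, \textbf{3.}, \textbf{5.} of Theorem \ref{generaltheorem}, and you have simply written out that deduction, including the identification of $\Vert\hat{A}^{[r]}\Vert_{(\ell_{\infty},\ell_{\infty})}$ with $\sup_{n\geq r+1}\sum_{k}|\hat a_{nk}|$. There is nothing to add or correct.
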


\begin{Cor}
We obtain as an immediate consequence of (\ref{c14}) and Theorem
\ref{generaltheorem} ((2.1), (4.1) and (6.1)).

\begin{itemize}
\item[(i)] If $A\in (c(\Delta ^{(  \widetilde{\alpha} )} ),c_{0})$, then $L_{A}$
is compact if and only if
\begin{equation}
\lim\limits_{r\rightarrow \infty }\left( \sup\limits_{n\geq r}\left(
\sum\limits_{k=0}^{\infty}\left\vert \hat{a}_{nk}\right\vert +\left\vert \gamma
_{n}\right\vert \right) \right) =0. \label{comp3}
\end{equation}

\item[(ii)] If $A\in (c(\Delta ^{(  \widetilde{\alpha} )} ),c)$, then $L_{A}$ is
compact if and only if
\begin{equation*}
\lim\limits_{r\rightarrow \infty }\sup\limits_{n\geq r}\left(
\sum\limits_{k=0}^{\infty}\left\vert \hat{a}_{nk}-%
\hat{\alpha}_{k}\right\vert +\left\vert \sum\limits_{k=0}^{\infty}\hat{\alpha}%
_{k}-\gamma _{n}-\beta \right\vert \right) =0.
\end{equation*}

\item[(iii)] If $A\in (c(\Delta ^{(  \widetilde{\alpha} )} ),\ell _{\infty })$,
then $L_{A}$ is compact if (\ref{comp3}) holds.
\end{itemize}
\end{Cor}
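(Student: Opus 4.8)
The plan is to derive the final corollary directly from Theorem~\ref{generaltheorem} together with characterization~\eqref{c14} of Lemma~\ref{compactoperator}, exactly as the preceding corollary was obtained. Recall that \eqref{c14} says $L_A\in\mathcal C(X,Y)$ if and only if $\|L_A\|_\chi=0$, so each part is just the assertion ``the relevant bound in Theorem~\ref{generaltheorem} equals zero.'' Concretely, for part (i) I would invoke entry \textbf{4.} of the table, which gives the identity
\begin{equation*}
\Vert L_{A}\Vert _{\chi }=\lim_{r\rightarrow \infty }\Bigl(\sup_{n\geq r}\sum_{k=0}^{\infty }|\hat a_{nk}|+|\gamma_{n}|\Bigr);
\end{equation*}
since this is an exact identity (not merely an estimate), $\|L_A\|_\chi=0$ is literally equivalent to \eqref{comp3}, which proves (i). For part (ii) I would use entry \textbf{6.}, which sandwiches $\|L_A\|_\chi$ between $\tfrac12$ times and $1$ times the quantity $\lim_{r\to\infty}\bigl(\sup_{n\geq r}\sum_k|\hat b_{nk}|+|\delta_n|\bigr)$; because both outer bounds vanish simultaneously, $\|L_A\|_\chi=0$ is equivalent to that limit being zero. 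Then I would unwind the notation: $\hat b_{nk}=\hat a_{nk}-\hat\alpha_k$ and $\delta_n=\sum_{k=0}^{\infty}\hat\alpha_k-\gamma_n+\beta$, which turns the condition into precisely the displayed formula in (ii). For part (iii) I would use entry \textbf{2.}, the one-sided estimate $0\leq\|L_A\|_\chi\leq\lim_{r\to\infty}\bigl(\sup_{n\geq r}\sum_k|\hat a_{nk}|+|\gamma_n|\bigr)$; here only the ``if'' direction is available, so I conclude that \eqref{comp3} is sufficient for compactness, matching the statement.

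The key steps, in order, are: (1) recall \eqref{c14} to reduce compactness to the vanishing of $\|L_A\|_\chi$; (2) for each of the three target spaces $c_0$, $c$, $\ell_\infty$ plug in the corresponding table entry (\textbf{4.}, \textbf{6.}, \textbf{2.} respectively) from Theorem~\ref{generaltheorem}; (3) for the two-sided bounds in \textbf{4.} and \textbf{6.} note the equivalence is immediate, while for the one-sided bound in \textbf{2.} only sufficiency follows; (4) substitute the definitions of $\hat b_{nk}$, $\gamma_n$, $\beta$, $\delta_n$ to present the conditions in the stated closed form.

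I do not anticipate a genuine obstacle here: the result is a packaging of Theorem~\ref{generaltheorem} and the classical fact that compactness of a bounded operator is equivalent to vanishing Hausdorff measure of noncompactness, and the proof is a one-line appeal in each case. The only mild care needed is the bookkeeping in part (ii): one must correctly expand $\delta_n=\sum_{k}\hat\alpha_k-\gamma_n+\beta$ and observe $\sum_k|\hat b_{nk}|=\sum_k|\hat a_{nk}-\hat\alpha_k|$, so that $\sup_{n\geq r}(\sum_k|\hat b_{nk}|+|\delta_n|)=0$ as $r\to\infty$ reproduces exactly
\begin{equation*}
\lim_{r\rightarrow \infty }\sup_{n\geq r}\Bigl(\sum_{k=0}^{\infty}|\hat a_{nk}-\hat\alpha_{k}|+\Bigl|\sum_{k=0}^{\infty}\hat\alpha_{k}-\gamma_{n}-\beta\Bigr|\Bigr)=0,
\end{equation*}
and that the factor $\tfrac12$ in entry \textbf{6.} is harmless since it does not affect whether the limit is zero. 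Thus the whole corollary follows ``as an immediate consequence of \eqref{c14} and Theorem~\ref{generaltheorem}'' as claimed.
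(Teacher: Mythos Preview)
Your proposal is correct and mirrors the paper's own treatment: the paper gives no separate proof for this corollary beyond declaring it ``an immediate consequence of \eqref{c14} and Theorem~\ref{generaltheorem},'' and you have simply spelled out which table entries (\textbf{4.}, \textbf{6.}, \textbf{2.}) feed which part and why the two-sided estimates in \textbf{4.} and \textbf{6.} give equivalences while the one-sided estimate in \textbf{2.} gives only sufficiency. One bookkeeping remark: by the definition in Theorem~\ref{generaltheorem} one has $\delta_n=\sum_k\hat\alpha_k-\gamma_n+\beta$, so $|\delta_n|=\bigl|\sum_k\hat\alpha_k-\gamma_n+\beta\bigr|$; the sign on $\beta$ in the displayed condition of part~(ii) differs, which is a typographical slip in the paper rather than a flaw in your argument.
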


\begin{Cor}
We obtain as an immediate consequence of (\ref{c14}), Theorem
\ref{generaltheorem} ((7.1), (8.1)).

\begin{itemize}
\item[(i)] If $A\in (c_{0}(\Delta ^{(  \tilde{\alpha} )} ),\ell _{1})$ or $A\in
(\ell_{\infty}(\Delta ^{(  \tilde{\alpha} )} ),\ell _{1})$, then $L_{A}$ is compact if
and only if
\begin{equation*}
\lim\limits_{r\rightarrow \infty }\left( \sup\limits_{\substack{
N\subset \mathbb{N}_{0} \\ \text{finite}}}\left\Vert
\sum\limits_{n\in N_{r}}\sum\limits_{j=k}^{\infty
}(-1)^{j-k}\frac{\Gamma(-\tilde{\alpha} +1)}{(j-k)!\Gamma(-\tilde{\alpha} -j+k+1)}a_{nj}\right\Vert _{1}\right) =0.
\end{equation*}

\item[(ii)] If $A\in (c(\Delta ^{(  \tilde{\alpha} )} ),\ell _{1})$, then $%
L_{A}$ is compact if and only if
\begin{equation*}
\lim\limits_{r\rightarrow \infty }\sup\limits_{\substack{ N\subset
\mathbb{N}_{0} \\ \text{finite}}}\left(
\sum\limits_{k=0}^{\infty}\left\vert \sum\limits_{n\in N_{r}}\sum\limits_{j=k}^{\infty
}(-1)^{j-k}\frac{\Gamma(-\tilde{\alpha} +1)}{(j-k)!\Gamma(-\tilde{\alpha} -j+k+1)}a_{nj}\right\vert +\left\vert \sum\limits_{n\in
N_{r}}\gamma _{n}\right\vert \right) =0.
\end{equation*}
\end{itemize}
\end{Cor}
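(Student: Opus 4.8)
The plan is to read the corollary off from the characterisation of compactness through the Hausdorff measure of noncompactness, (\ref{c14}), together with the two-sided estimates for $\left\Vert L_{A}\right\Vert _{\chi }$ already established in parts \textbf{7} and \textbf{8} of Theorem \ref{generaltheorem}. What makes such a deduction work is that in both of those items the lower and upper bounds for $\left\Vert L_{A}\right\Vert _{\chi }$ agree up to the multiplicative constant $4$, so that $\left\Vert L_{A}\right\Vert _{\chi }$ is zero precisely when the common quantity occurring on both sides is zero.

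First I would settle part (i). Let $X$ denote $c_{0}(\Delta ^{(\tilde{\alpha})})$ or $\ell _{\infty }(\Delta ^{(\tilde{\alpha})})$, take $A\in (X,\ell _{1})$, and abbreviate
\begin{equation*}
\Lambda =\lim_{r\rightarrow \infty }\sup_{\substack{ N\subset \mathbb{N}_{0}\\ \text{finite}}}\left\Vert \sum_{n\in N_{r}}\hat{A}_{n}\right\Vert _{1}.
\end{equation*}
Part \textbf{7} of Theorem \ref{generaltheorem} gives $\Lambda \leq \left\Vert L_{A}\right\Vert _{\chi }\leq 4\Lambda $, so $\left\Vert L_{A}\right\Vert _{\chi }=0$ if and only if $\Lambda =0$; combining this with (\ref{c14}) shows that $L_{A}$ is compact if and only if $\Lambda =0$. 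It then remains only to substitute into $\Lambda $ the definition $\hat{a}_{nk}=\sum_{j=k}^{\infty }(-1)^{j-k}\frac{\Gamma (-\tilde{\alpha}+1)}{(j-k)!\,\Gamma (-\tilde{\alpha}-j+k+1)}a_{nj}$, which rewrites the equation $\Lambda =0$ as the display asserted in the corollary.

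For part (ii) the argument is the same, with part \textbf{8} of Theorem \ref{generaltheorem} used in place of part \textbf{7}; there the quantity whose vanishing is equivalent to compactness of $L_{A}$ is
\begin{equation*}
\lim_{r\rightarrow \infty }\sup_{\substack{ N\subset \mathbb{N}_{0}\\ \text{finite}}}\left( \left\Vert \sum_{n\in N_{r}}\hat{A}_{n}\right\Vert _{1}+\left\vert \sum_{n\in N_{r}}\gamma _{n}\right\vert \right),
\end{equation*}
and one again expresses $\hat{A}_{n}$ and $\gamma _{n}$ through the entries of the original matrix $A$ to reach the stated condition.

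I do not expect any genuine difficulty here: the substantive content is entirely inside Theorem \ref{generaltheorem}, and what remains is the trivial equivalence that $\Lambda =0$ holds if and only if $4\Lambda =0$, plus routine bookkeeping. The one point worth a moment's care is the matching of index sets: the row-truncated matrix $A^{[r]}$ appearing in Theorem \ref{generaltheorem} has the effect of replacing $N$ by its tail $N_{r}$ (a finite subset of $\{r,r+1,\dots\}$) inside the supremum, so before calling the deduction ``immediate'' one should confirm that the quantity figuring in the estimates of Theorem \ref{generaltheorem} is exactly the one displayed in the corollary.
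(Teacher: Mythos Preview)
Your proposal is correct and follows exactly the approach indicated in the paper: the corollary is stated there as an immediate consequence of (\ref{c14}) together with the two-sided estimates in parts \textbf{7} and \textbf{8} of Theorem \ref{generaltheorem}, with no further argument given. Your observation that the lower and upper bounds differ only by the constant $4$, so that $\Vert L_{A}\Vert_{\chi}=0$ iff the displayed quantity vanishes, is precisely the one-line deduction the paper has in mind.
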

\section*{Compliance with ethical standards}
\subsection*{Conflict of interest}
The authors declare that they have no conflict of
interest.

\subsection*{Ethical approval}
This article does not contain any studies with human
participants or animals performed by any of the authors.

\end{document}